\newcommand{\To}{\ensuremath{\rightrightarrows}}
\newcommand{\fenv}[1]%
{\ensuremath{\,\overrightarrow{\operatorname{env}}_{#1}}}
\newcommand{\benv}[1]%
{\ensuremath{\,\overleftarrow{\operatorname{env}}_{#1}}}
\newcommand{\emp}{\ensuremath{\varnothing}}
\newcommand{\scal}[2]{\left\langle{#1},{#2}  \right\rangle}
\newcommand{\RR}{\ensuremath{\mathbb R}}
\newcommand{\RPX}{\ensuremath{\left[0,+\infty\right]}}
\newcommand{\RX}{\ensuremath{\,\left]-\infty,+\infty\right]}}
\newcommand{\ball}{\ensuremath{B}}
\newcommand{\Hess}{\ensuremath{\nabla^2\!}}
\newcommand{\oldIDD}{\ensuremath{\operatorname{int}\operatorname{dom}f}}
\newcommand{\IDD}{\ensuremath{U}}
\newcommand{\dom}{\ensuremath{\operatorname{dom}}}
\newcommand{\argmin}{\ensuremath{\operatorname*{argmin}}}
\newcommand{\intdom}{\ensuremath{\operatorname{int}\operatorname{dom}}\,}
\newcommand{\inte}{\ensuremath{\operatorname{int}}}
\newcommand{\deriv}{\ensuremath{\operatorname{\; d}}}
\newcommand{\rockderiv}{\ensuremath{\operatorname{\; \hat{d}}}}
\newcommand{\closu}{\ensuremath{\operatorname{cl}}}
\newcommand{\ran}{\ensuremath{\operatorname{ran}}}
\newcommand{\conv}{\ensuremath{\operatorname{conv}}}
\newcommand{\Id}{\ensuremath{\operatorname{Id}}}
\newcommand{\fproj}[1]{\overrightarrow{P\thinspace}_%
{\negthinspace\negthinspace #1}}
\newcommand{\bproj}[1]{\overleftarrow{\thinspace P\thinspace}_%
{\negthinspace\negthinspace #1}}
\newcommand{\proj}[1]{{\thinspace P\thinspace}_%
{\negthinspace\negthinspace #1}}
\newcommand{\fD}[1]{\overrightarrow{D\thinspace}_%
{\negthinspace\negthinspace #1}}
\newcommand{\bD}[1]{\overleftarrow{\thinspace D\thinspace}_%
{\negthinspace\negthinspace #1}}
\newcommand{\bDC}{$\bD{}\,$-Chebyshev}
\newcommand{\pinf}{\ensuremath{+\infty}}
\newtheorem{theorem}{Theorem}[section]
\newtheorem{lemma}[theorem]{Lemma}
\newtheorem{corollary}[theorem]{Corollary}
\newtheorem{proposition}[theorem]{Proposition}
\newtheorem{definition}[theorem]{Definition}
\theoremstyle{plain}{\theorembodyfont{\rmfamily}
}
\theoremstyle{plain}{\theorembodyfont{\rmfamily}
}
\theoremstyle{plain}{\theorembodyfont{\rmfamily}
}
\theoremstyle{plain}{\theorembodyfont{\rmfamily}
\newtheorem{example}[theorem]{Example}}
\newtheorem{fact}[theorem]{Fact}
\theoremstyle{plain}{\theorembodyfont{\rmfamily}
\newtheorem{remark}[theorem]{Remark}}
\begin{document}

\title{\sffamily Bregman distances and Chebyshev sets}

\author{Heinz H.\ Bauschke\thanks{Mathematics, Irving K.\ Barber School,
The University of British Columbia Okanagan, Kelowna,
B.C. V1V 1V7, Canada. Email:
\texttt{heinz.bauschke@ubc.ca}.},~
Xianfu
Wang\thanks{Mathematics, Irving K.\ Barber School,
The University of British Columbia Okanagan, Kelowna,
B.C. V1V 1V7, Canada.
Email:
\texttt{shawn.wang@ubc.ca}.},~
Jane Ye\thanks{Department of Mathematics and Statistics, University of
Victoria, Victoria, B.C. V8W 3P4, Canada.
Email:~\texttt{janeye@math.uvic.ca}.},~
and~Xiaoming Yuan\thanks{Department of Management Science,
Antai College of Economics and Management, 
Shanghai Jiao Tong University, Shanghai, 200052, China.
Email: \texttt{xmyuan@sjtu.edu.cn}.}}

\date{December 24, 2007}

\maketitle

\vskip 8mm

\begin{abstract} \noindent
A closed set of a Euclidean space is said to be
Chebyshev if every point in the space  has one and only one closest
point in the set. Although the situation is not settled
in infinite-dimensional Hilbert spaces, in
1932 Bunt showed that in Euclidean spaces a
closed set is Chebyshev if and only if the set is convex. In this
paper, from the more general perspective of Bregman
distances, we show that if every point in the space has a unique
nearest point in a closed set, then the set is convex. We provide
two approaches: one is by nonsmooth analysis; the other by maximal
monotone operator theory. Subdifferentiability properties of Bregman nearest
distance functions are also given.
\end{abstract}

{\small
\noindent
{\bfseries 2000 Mathematics Subject Classification:}
Primary 41A65;
Secondary 47H05, 49J52.

\noindent {\bfseries Keywords:}
Bregman distance,
Bregman projection,
Chebyshev set with respect to a Bregman distance,
Legendre function,
maximal monotone operator,
nearest point,
subdifferential operators.
}

\section{Introduction}

Throughout, $\RR^J$ is the standard Euclidean space with inner
product $\scal{\cdot}{\cdot}$ and induced norm $\|\cdot\|$,
and $\Gamma$ is the set of proper lower semicontinuous convex functions on $\RR^J$. Let $C$
be a nonempty closed subset of $\RR^J$. If each $x\in \RR^J$
has a unique nearest point in $C$, the set $C$ is called
Chebyshev. The famous Chebyshev set problem inquires: ``Is a Chebyshev set
necessarily convex?''. It has been studied by many authors, see
\cite{Edgar,jborwein,Frank1,deutsch,lewis,urruty1,urruty3,urruty4}
and the references therein.
Although answered in the affirmative by Bunt in 1932,
we look at the problem from the more general point of view
of Bregman distances.

Let
\begin{equation} \label{eq:preD}
\text{$f \colon \RR^J\to\RX$ be convex and differentiable on $U :=
\oldIDD \neq \emp$.}
\end{equation}
The \emph{Bregman distance} associated with $f$ is defined by
\begin{equation} \label{eq:D}
D\colon \RR^J \times \RR^J \to \RPX \colon (x,y) \mapsto
\begin{cases}
f(x)-f(y)-\scal{\nabla f(y)}{x-y}, &\text{if}\;\;y\in\IDD;\\
\pinf, & \text{otherwise}.
\end{cases}
\end{equation}
Assume that $C\subset U$.  It is a natural generalization of the
Chebyshev problem to ask the following:
\begin{quotation}
\noindent
``If every $x\in U$ has ---  in terms of the Bregman distance --- a unique
nearest point in $C$, i.e., $C$ is Chebyshev for the Bregman distance, must $C$
be convex?''
\end{quotation}
We give two approaches to our affirmative answer:
one uses beautiful properties of maximal monotone
operators: Rockafellar's virtual convexity theorem on ranges of
maximal monotone operators; the other uses generalized
subdifferentials from nonsmooth analysis, which allows us to
characterize Chebyshev sets. We also study subdifferentiabilities of
Bregman distance functions associated to closed sets. These
nonsmooth analysis results are interesting in their own right, since
Bregman distances have found tremendous applications
in Statistics, Engineering, and Optimization; see the recent books
\cite{ButIus,CenZen} and the references therein.

The function $D$ does not define a metric, since it is not symmetric and
does not satisfy the triangle inequality.
It is thus remarkable that it is not only possible to derive many results on projections and distances similar to the one obtained in finite
dimensional Euclidean spaces, but also to provide a general framework for
best approximations.

The paper is organized as follows. In Section~\ref{assumption}, we state
our assumptions on $f$ and provide some concrete choices. In Section~\ref{geodesicscurve}, we characterize left Bregman nearest points and geodesics. We show that the Bregman normal is a proximal normal. In Section~\ref{monotone}, when $f$ is Legendre and $1$-coercive and $C$ is Chebyshev, we show that the composition of the Bregman nearest-point map and $\nabla f^*$ is maximal monotone. This allows us to apply Rockafellar's theorem on virtual convexity of range of maximal monotone operator to obtain that a Chebyshev set is convex.
In Section~\ref{clarkemor}, we study subdifferentiability properties of
left Bregman distance function. Formulas for the Clarke subdifferential,
the limiting subdifferential and the Dini subdifferential are given. In
Section~\ref{completecheby}, we give a complete characterizations of
Chebyshev sets. Our approach generalizes the results given by
Hiriart-Urruty \cite{urruty4,urruty1} from the Euclidean to the Bregman setting. Finally, in Section~\ref{rightchar} we show that the convexity of Chebyshev sets for right Bregman projections of $f$ can be studied by using the left Bregman projections of $f^*$. We give an example showing
that even if the right Bregman projection is single-valued, the set $C$ 
need not be convex.

\noindent{\bf Notation:} In $\RR^J$, the closed ball centered at $x$ with
radius $\delta>0$ is denoted by ${B}_{\delta}(x)$ and the closed unit ball
is $\ball = B_1(0)$. For a set $S$, the expressions
$\inte S$, $\closu S$, $\conv S$ signify the interior, closure, and convex hull of $S$ respectively.
For a set-valued mapping $T:\RR^J\To\RR^J$, we use $\ran T$ and $\dom T$ for its range and domain, and
$T^{-1}$ for its set-valued inverse, i.e.,
$x\in T^{-1}(y) \Leftrightarrow y\in T(x).$ 
For a function $f:\RR^J\rightarrow \RX$, $\dom f$ is the domain of $f$, and $f^*$ is its Fenchel conjugate;
$\conv f$ ($\closu\conv f$) denotes the convex hull (closed convex hull) of $f$. For a differentiable function $f$,
$\nabla f(x)$ and $\nabla^2 f(x)$ denote the gradient vector and the Hessian matrix at $x$.
Our notation is standard and follows, e.g., \cite{Rock70,Rock98}.

\section{Standing Assumptions and Examples}\label{assumption}
From now on, and until the end of Section~\ref{completecheby},
our standing assumptions on $f$ and $C$ are:
\begin{itemize}
\item[\bfseries A1]
$f\in\Gamma$ is a convex function of Legendre type,
i.e., $f$ is essentially smooth and essentially strictly convex in the
sense of \cite[Section~26]{Rock70}.
\item[\bfseries A2]
$f$ is $1$-coercive, i.e., $\displaystyle \lim_{\|x\|\rightarrow
+\infty}f(x)/\|x\|=+\infty$. An equivalent requirement is $\dom f^*=\RR^J$
(see \cite[Theorem~11.8(d)]{Rock98}).
\item[\bfseries A3] The set $C$ is a nonempty closed subset of $\IDD$.
\end{itemize}
Important instances of functions satisfying the above conditions are:
\begin{example}
Let $x=(x_j)_{1\leq j\leq J}$ and $y=(y_j)_{1\leq j\leq J}$ be
two points in $\RR^J$.
\begin{enumerate}
\item
\emph{Energy:}
If $f\colon x\mapsto\tfrac{1}{2}\|x\|^2$, then $U=\RR^J$,
\begin{equation*}
D(x,y) = \frac{1}{2}\|x-y\|^2,
\end{equation*}
and $\Hess f(x)=\Id$ for every $x\in\RR^J$. Note that $f^*(x)=\frac{1}{2}\|x\|^2$, $\dom f^*=\RR^J$,
and $\Hess f^*=\Id$.
\item  \label{ex:examples:KL}
\emph{Boltzmann-Shannon entropy:}
If $f\colon x\mapsto\sum_{j=1}^{J}x_j\ln(x_j)-x_j$ if $x\geq 0$, $+\infty$ otherwise.
Here $x\geq 0$ means $x_{j}\geq 0$ for $1\leq j\leq J$ and
similarly for $x> 0$, and $0\ln 0=0$.
Then $U=\{x\in \RR^J\colon x > 0\}$, and
\begin{equation*}
D(x,y) = \begin{cases}
\textstyle \sum_{j=1}^J x_j \ln(x_j/y_j) - x_j + y_j, &
\text{if $x \geq 0$ and $y>0$;}\\
\pinf, & \text{otherwise}
\end{cases}
\end{equation*}
is the so-called \emph{Kullback-Leibler divergence}.
Note that
$$\Hess f(x)= \begin{pmatrix}
1/x_{1} &  0 & \cdots & 0& 0\\
0 & 1/x_{2} & 0& \cdots    & 0\\
\vdots & 0 & \ddots &0 & 0 \\
0 & \ldots & 0 & 1/x_{J-1}& 0\\
0 & \ldots &0 & 0 & 1/x_{J}
\end{pmatrix},
$$
that
$ f^*(x)=\sum_{j=1}^{J}e^{x_{j}}$ with $\dom f^*=\RR^{J}$, and
that $$\Hess f^*(x)=
\begin{pmatrix}
e^{x_{1}} &  0 & \cdots & 0& 0\\
0 & e^{x_{2}} & 0& \cdots    & 0\\
\vdots & 0 & \ddots & 0& 0 \\
0 & \ldots & 0 & e^{x_{J-1}}& 0\\
0 & \ldots &0 & 0 & e^{x_{J}}
\end{pmatrix}.
$$
\item \emph{Fermi-Dirac entropy}: If $f:x\mapsto \sum_{j=1}^{J}x_{j}\ln x_{j}+(1-x_{j})\ln (1-x_{j})$,
then $U=\{x\in \RR^J: 0<x<1\}$ and
$$
D(x,y) = \begin{cases}
\textstyle \sum_{j=1}^J x_j \ln(x_j/y_j)+(1-x_{j})\ln((1-x_{j})/(1-y_{j})), &
\text{if $1\geq x \geq 0$ and $1>y>0$;}\\
\pinf, & \text{otherwise.}
\end{cases}
$$
While
$$\Hess f(x)=
\begin{pmatrix}
\frac{1}{x_{1}(1-x_{1})} &  0 & \cdots & 0\\
0 & \frac{1}{x_{2}(1-x_{2})}& 0& 0 \\
\vdots & 0 & \ddots & 0\\
0 & \ldots & 0 & \frac{1}{x_{J}(1-x_{J})}\\
\end{pmatrix}, \quad \forall  0<x<1, x\in \RR^{J},
$$
we have
$f^*(x)=\sum_{j=1}^{J}\ln (1+e^{x_{j}})$ with
$$\Hess f^*(x)=
\begin{pmatrix}
\frac{e^{x_{1}}}{(1+e^{x_{1}})^2} &  0 & \cdots & 0\\
0 & \frac{e^{x_{2}}}{(1+e^{x_{2}})^2} & 0& 0\\
\vdots & 0 & \ddots & 0\\
0 & \ldots & 0 & \frac{e^{x_{J}}}{(1+e^{x_{J}})^2}\\
\end{pmatrix}, \quad \forall x\in \RR^J.
$$
\item In general, we can let
$f\colon x\mapsto\sum_{i=1}^J \phi(x_{i})$ where $\phi:\RR\rightarrow\RX$ is an Legendre function.
Then  $U=(\inte\dom\phi)^{J}$,
$$D(x,y)=\sum_{j=1}^{J}\phi(x_j)-\phi(y_{j})-\phi'(y_j)(x_j-y_j),\quad
\forall\; x\in\RR^J,y\in U.$$
In particular, one can use $\phi(t)=|t|^p/p$ with $p>1$.
\end{enumerate}
\end{example}
The following result (see \cite[Theorem~26.5]{Rock70}) plays
an important role in the sequel.
\begin{fact}[Rockafellar]\label{isom}
A convex function $f$ is of Legendre type if and only if $f^*$ is.
In this case, the gradient mapping
$$\nabla f:U \rightarrow\inte\dom f^*:x\mapsto \nabla f(x),$$
is a topological isomorphism with inverse mapping $(\nabla
f)^{-1}=\nabla f^*$.
\end{fact}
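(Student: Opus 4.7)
The plan is to split the claim into two parts: the biconditional ``$f$ is Legendre if and only if $f^*$ is'' and the topological isomorphism statement. The central engine is the duality between the two ingredients of Legendre type, namely essential smoothness and essential strict convexity.

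For the biconditional, recall that $f\in\Gamma$ is Legendre when it is both essentially smooth (differentiable on $U$ with $\|\nabla f(x_n)\|\to\infty$ whenever $x_n\to x\in\bd\dom f$) and essentially strictly convex (strictly convex on every convex subset of $\dom\partial f$). The heart of the matter is a conjugacy lemma from \cite{Rock70}: a function in $\Gamma$ is essentially smooth if and only if its Fenchel conjugate is essentially strictly convex. Applying this lemma to $f$ and, since $f^{**}=f$, to $f^*$, yields both
\begin{equation*}
f \text{ essentially smooth} \iff f^* \text{ essentially strictly convex},
\qquad
f \text{ essentially strictly convex} \iff f^* \text{ essentially smooth}.
\end{equation*}
Conjoining these gives ``$f$ Legendre $\iff$ $f^*$ Legendre''.

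For the isomorphism statement, essential smoothness makes $\nabla f$ a well-defined single-valued map on $U$, and essential strict convexity forces $\nabla f$ to be injective on $U$ (if $\nabla f(x_1)=\nabla f(x_2)$ with $x_1\ne x_2$, then $f$ is affine, hence not strictly convex, on the segment $[x_1,x_2]\subset U\subset\dom\partial f$). Combined with the Fenchel--Young equivalence
\begin{equation*}
y=\nabla f(x)\iff x\in\partial f^*(y),
\end{equation*}
and the fact that $f^*$ being Legendre makes $\partial f^*(y)=\{\nabla f^*(y)\}$ for $y\in\inte\dom f^*$, this shows $\nabla f$ maps $U$ bijectively onto $\inte\dom f^*$ with set-theoretic inverse $\nabla f^*$. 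Continuity of each map on the interior of its domain is a standard consequence of convexity of $f$ and $f^*$, so the bijection is a homeomorphism.

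The principal obstacle is the conjugacy lemma relating essential smoothness to essential strict convexity: one must turn the boundary blow-up of $\nabla f$ near $\bd\dom f$ into the failure of uniqueness of subgradients of $f^*$ at the corresponding ``boundary'' points of $\ran\nabla f$, and conversely. Once this lemma is accepted (as we may, citing \cite[Section 26]{Rock70}), the remainder is routine subdifferential calculus.
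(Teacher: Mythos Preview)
The paper does not supply a proof of this Fact; it simply records the result and cites \cite[Theorem~26.5]{Rock70}. Your sketch is correct and is essentially the standard argument behind that theorem: the duality ``essentially smooth $\Leftrightarrow$ conjugate essentially strictly convex'' (Rockafellar's Theorem~26.3) gives the biconditional, and the Fenchel--Young relation $y=\nabla f(x)\Leftrightarrow x\in\partial f^*(y)$ together with $\dom\partial f=U$ and $\dom\partial f^*=\inte\dom f^*$ (Theorem~26.1) yields the bijection, with continuity of the gradients coming from convexity (Theorem~25.5). One small point you glide over is why $\ran\nabla f$ is exactly $\inte\dom f^*$ rather than merely contained in it; this follows once you note that for $y\in\inte\dom f^*$ the point $x=\nabla f^*(y)$ lies in $\dom\partial f=U$ and satisfies $\nabla f(x)=y$, but it is worth making explicit.
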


\section{Bregman Distances and Projection Operators}\label{geodesicscurve}
We start with
\begin{definition}
The \emph{left Bregman nearest-distance function} to $C$ is defined
by
\begin{equation}
\bD{C}\:  \colon U\to \RPX \colon y\mapsto
\inf_{x\in C}D(x,y),
\end{equation}
and the \emph{left Bregman nearest-point map} (i.e., the classical
Bregman projector) onto $C$ is
$$\bproj{C}\colon \IDD \to \IDD \colon y\mapsto
\underset{x\in C}{\operatorname{argmin}}\;\: D(x,y) = \{x\in C\colon D(x,y)
= \bD{C}(y)\}.$$
\end{definition}
The \emph{right Bregman distance} and \emph{right Bregman
projector} onto $C$ are defined analogously and denoted by
$\fD{C}$ and $\fproj{C}$, respectively.
Note that while in \cite{noll} the authors consider proximity operators associated with convex
set $C$, here our set $C$ need not be convex and we do not
assume that $D(\cdot,\cdot)$ is jointly convex.

We shall often need the following identity
\begin{equation}\label{3point}
D(c,y)-D(x,y)=f(c)-f(x)-\langle\nabla f(y),c-x\rangle,
\end{equation}
which is an immediate consequence of the definition.

Our first result characterizes the left Bregman nearest point.
\begin{proposition}\label{near}
Let $x\in C$ and $y\in U$.
\begin{enumerate}
\item[{\rm (i)}] Then
\begin{equation}\label{same}{x}\in \bproj{C}(y)\quad \Leftrightarrow \quad
D(c,x)\geq \langle \nabla f(y)-\nabla f(x), c-x\rangle \quad \forall c\in C.
\end{equation}
If $C$ is convex, then
\begin{equation}\label{convexpart} {x}\in
\bproj{C}(y)\quad \Leftrightarrow \quad 0\geq \langle \nabla
f(y)-\nabla f(x), c-x\rangle \quad \forall c\in C.
\end{equation}
\item[{\rm (ii)}]
Suppose that $x\in \bproj{C}(y)$. Then  the Bregman projection of
\begin{equation}\label{geodesics}
z_{\lambda}=\nabla f^*(\lambda \nabla f(y)+(1-\lambda)\nabla f(x)) \mbox{ with $0\leq \lambda< 1$,}
 \end{equation}
 on $C$ is singleton with
\begin{equation}\label{25:i}
\bproj{C}(z_{\lambda})=x.
\end{equation}
If $C$ is convex, \eqref{25:i} holds for every $\lambda\geq 0$.
\end{enumerate}
\end{proposition}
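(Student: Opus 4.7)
The plan is to prove (i) first by a direct manipulation of the three-point identity \eqref{3point}, then reduce (ii) to (i) by exploiting the algebraic identity $\nabla f(z_\lambda) - \nabla f(x) = \lambda(\nabla f(y) - \nabla f(x))$.

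For (i), I combine \eqref{3point} with the definition $D(c,x) = f(c)-f(x)-\langle \nabla f(x), c-x\rangle$ to obtain
\begin{equation*}
D(c,y) - D(x,y) \;=\; D(c,x) - \langle \nabla f(y) - \nabla f(x), c-x \rangle \qquad \forall c\in C.
\end{equation*}
Since $x\in C$, the statement $x\in\bproj{C}(y)$ is equivalent to the left-hand side being nonnegative for every $c\in C$, which is precisely \eqref{same}. For the convex case \eqref{convexpart}, the implication ``$\Leftarrow$'' is immediate because $D(c,x)\geq 0$. For ``$\Rightarrow$'', given any $c\in C$ I apply \eqref{same} along the segment $c_t := x + t(c-x)\in C$ for $t\in(0,1]$ and divide by $t$. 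The differentiability of $f$ at $x$ gives $D(c_t,x)/t = (f(x+t(c-x))-f(x))/t - \langle \nabla f(x), c-x\rangle \to 0$ as $t\downarrow 0^+$, so the linear term $\langle \nabla f(y) - \nabla f(x), c-x\rangle$ must be $\leq 0$.

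For (ii), note first that $z_\lambda$ is well defined since A2 gives $\dom f^* = \RR^J$, and Fact~\ref{isom} yields $\nabla f(z_\lambda) = \lambda\nabla f(y) + (1-\lambda)\nabla f(x)$, hence $\nabla f(z_\lambda) - \nabla f(x) = \lambda(\nabla f(y) - \nabla f(x))$. Plugging this into the identity displayed above (with $z_\lambda$ in place of $y$) produces
\begin{equation*}
D(c,z_\lambda) - D(x,z_\lambda) \;=\; D(c,x) - \lambda\,\langle \nabla f(y) - \nabla f(x), c-x\rangle.
\end{equation*}
The hypothesis $x\in\bproj{C}(y)$ combined with \eqref{same} gives $D(c,x) \geq \langle \nabla f(y) - \nabla f(x), c-x\rangle$. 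A short case split on the sign of the inner product then shows that for $0\leq \lambda < 1$,
$$\lambda\,\langle \nabla f(y)-\nabla f(x), c-x\rangle \;\leq\; D(c,x),$$
with strict inequality whenever $c\neq x$: if the inner product is $\leq 0$ the surplus comes from $D(c,x)>0$ (essential strict convexity of $f$ on $U$ per A1, together with $C\subset U$ per A3, ensures $D(c,x)>0$ when $c\neq x$); if it is positive, strictness comes from $\lambda<1$. Thus $\bproj{C}(z_\lambda) = \{x\}$. For the convex case, \eqref{convexpart} upgrades the inequality to $\langle \nabla f(y)-\nabla f(x), c-x\rangle \leq 0$, so the argument goes through for all $\lambda\geq 0$ without the restriction $\lambda<1$.

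The main obstacle I anticipate is the bookkeeping for uniqueness in (ii): the inequality $D(c,z_\lambda)\geq D(x,z_\lambda)$ alone only shows $x$ is a minimizer, and promoting this to singleton-valuedness hinges on correctly separating the cases where strict inequality comes from $\lambda<1$ versus from strict convexity. Everything else is bookkeeping on the three-point identity and a straightforward use of Fact~\ref{isom} to define $z_\lambda$.
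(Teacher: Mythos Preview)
Your proof is correct and follows the same core strategy as the paper: the three-point identity for (i), and the relation $\nabla f(z_\lambda)-\nabla f(x)=\lambda(\nabla f(y)-\nabla f(x))$ for (ii). Two points of execution differ, and in both your version is cleaner. First, for \eqref{convexpart} the paper simply cites \cite[Proposition~3.16]{Baus97}, whereas you give a self-contained limiting argument along $c_t=x+t(c-x)$. Second, and more notably, for uniqueness in (ii) the paper first shows $x\in\bproj{C}(z_\lambda)$ and then runs a separate algebraic computation: assuming another minimizer $\hat{x}$, it manipulates $D(x,z_\lambda)=D(\hat{x},z_\lambda)$ into $\lambda[D(x,y)-D(\hat{x},y)]=(1-\lambda)D(\hat{x},x)$ to force $\hat{x}=x$; for the convex extension to $\lambda\geq 1$ it then introduces an auxiliary $\lambda_1>\lambda$ and reparametrizes $z_\lambda$ along the geodesic from $x$ to $z_{\lambda_1}$ to reduce to the case $\lambda/\lambda_1<1$. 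Your argument bypasses all of this by observing directly that the inequality $\lambda\langle\nabla f(y)-\nabla f(x),c-x\rangle\leq D(c,x)$ is \emph{strict} for $c\neq x$ (via $D(c,x)>0$ when the inner product is nonpositive, via $\lambda<1$ when it is positive), so $D(c,z_\lambda)>D(x,z_\lambda)$ and uniqueness is immediate; the convex case then needs no auxiliary parameter since the inner product is always nonpositive. The paper's route is more hands-on with the Bregman algebra, while yours exploits strict convexity earlier and avoids the extra bookkeeping.
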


\begin{proof}
(i): By definition,
$x\in \bproj{C}(y)$ if and only if
$$0\leq  D(c,y)-D(x,y) \quad \forall c\in C;$$
equivalently,
$f(c)-f(x)\geq \langle \nabla f(y), c-x\rangle$ by (\ref{3point}).
Subtracting $\langle \nabla f(x), c-x\rangle$ from both sides,
we obtain
$$D(c,x)\geq \langle \nabla f(y)-\nabla f(x),c-x\rangle.$$
Hence (\ref{same}) holds.

The convex counterpart \eqref{convexpart} is well known and
follows, e.g.,  from \cite[Proposition~3.16]{Baus97}.

(ii): Assume that $x\in\bproj{C}(y)$ and $z_{\lambda}=\nabla f^*(\lambda \nabla f(y)+(1-\lambda)\nabla f(x))$
with $0\leq \lambda <1$.
Then by (\ref{same}),
\begin{equation}\label{charac}
D(c,x)\geq \langle \nabla f(y)-\nabla f(x), c-x\rangle \quad \forall c\in C.
\end{equation}
Take $c\in C$.
By Fact~\ref{isom}, $\nabla f\circ\nabla f^*=\Id$, we have
\begin{align}\label{relation}
&\langle\nabla f(z_{\lambda})-\nabla f(x),c-x\rangle\\
&= \langle\nabla f \circ\nabla f^*(\lambda\nabla f(y)+(1-\lambda)\nabla f(x))-\nabla f(x),c-x\rangle\\
&=\langle (\lambda\nabla f(y)+(1-\lambda)\nabla f(x))-\nabla f(x),c-x\rangle\\
&=\lambda \langle \nabla f(y)-\nabla f(x),c-x\rangle.
\end{align}
If $\langle\nabla f(y)-\nabla f(x),c-x\rangle\leq 0,$ then
$$\lambda \langle\nabla f(y)-\nabla f(x),c-x\rangle\leq 0\leq D(c,x);$$
if $\langle\nabla f(y)-\nabla f(x),c-x\rangle\geq 0$, then using $0\leq\lambda <1$ and (\ref{charac}),
$$\lambda \langle\nabla f(y)-\nabla f(x),c-x\rangle\leq\langle\nabla f(y)-\nabla f(x),c-x\rangle
\leq D(c,x).$$
In either case,  by (\ref{relation}) we have
$$\langle\nabla f(z_{\lambda})-\nabla f(x),c-x\rangle \leq D(c,x) \quad \forall c\in C.$$
Hence $x\in \bproj{C}(z_{\lambda})$ by (\ref{same}).
We proceed to show that $\bproj{C}(z_{\lambda})$ is a singleton. If $\lambda=0$, then $z_{\lambda}=x$, $\bproj{C}(x)=\{x\}$ by strict convexity of $f$. It remains to consider the case $0<\lambda<1$.
 Let
$\hat{x}\in \bproj{C}(z_{\lambda})$. Then
$D(x,z_{\lambda})=D(\hat{x},z_{\lambda})$, which is
$$f(\hat{x})-f(x)-\langle \nabla f(z_{\lambda}),\hat{x}-x\rangle =0,$$
by (\ref{3point}).
Using
$z_{\lambda}=\nabla f^*(\lambda \nabla f(y)+(1-\lambda)\nabla f(x)),$ we have
$$f(\hat{x})-f(x)-\langle \lambda \nabla f(y)+(1-\lambda)\nabla f(x),\hat{x}-x\rangle=0,$$
so that
$$\lambda [f(\hat{x})-f(x)-\langle \nabla
f(y),\hat{x}-x\rangle]+(1-\lambda)[f(\hat{x})-f(x)-\langle \nabla
f(x),\hat{x}-x\rangle]=0,
$$
and
$$\lambda [f(x)-f(\hat{x})-\langle \nabla f(y),x-\hat{x}\rangle]
=(1-\lambda)[f(\hat{x})-f(x)-\langle \nabla f(x),\hat{x}-x\rangle].
$$
This gives, by (\ref{3point}),
$\lambda [D(x,y)-D(\hat{x},y)]=(1-\lambda)D(\hat{x},x)$
and hence $$D(x,y)-D(\hat{x},y)=\frac{1-\lambda}{\lambda}D(\hat{x},x),$$
since $1>\lambda >0$.
If $\hat{x}\neq x$, then $D(\hat{x},x)>0$ by the strict convexity of $f$ so that $D(x,y)>D(\hat{x},y)$, and this
contradicts that $x\in \bproj{C}(y)$. Therefore, $\bproj{C}(z_{\lambda})=\{x\}$.

When $C$ is convex, by (\ref{convexpart}), $x\in \bproj{C}(y)$ if and only if
\begin{equation}\label{whythis}
\langle \nabla f(y)-\nabla f(x), c-x\rangle \leq 0, \quad \forall c\in C.
\end{equation}
If $z_{\lambda}=\nabla f^*(\lambda \nabla f(y)+(1-\lambda)\nabla f(x))$ with $\lambda\geq 0$, then
\begin{align}
&\langle \nabla f(z_{\lambda})-\nabla f(x), c-x\rangle=\langle \nabla f\circ\nabla f^*(\lambda \nabla f(y)+(1-\lambda)\nabla f(x))-\nabla f(x),c-x\rangle\\
& =\lambda\langle \nabla f(y)-\nabla f(x),c-x\rangle\leq 0.
\end{align}
By (\ref{convexpart}), $x\in\bproj{C}(z_{\lambda})$.  Applying (\ref{25:i}), we see that
$x=\bproj{C}(z_{\lambda})$. Indeed, select $\lambda_{1}>\lambda$. Since
\begin{equation}\label{ch1}
z_{\lambda}=\nabla f^*(\lambda \nabla f(y)+(1-\lambda)\nabla f(x))\quad  \Rightarrow \quad\nabla f(z_{\lambda})=
\nabla f(x)+\lambda (\nabla f(y)-\nabla f(x)),
\end{equation}
\begin{equation}\label{ch2}
z_{\lambda_{1}}=\nabla f^*(\lambda_{1} \nabla f(y)+(1-\lambda_{1})\nabla f(x))\quad \Rightarrow
\quad \nabla f(z_{\lambda_{1}})=\nabla f(x)+\lambda_{1} (\nabla f(y)-\nabla f(x)).
\end{equation}
Solve (\ref{ch2}) for $\nabla f(y)-\nabla f(x)$ and put into (\ref{ch1}) to get
$$\nabla f(z_{\lambda})
=\left(1-\frac{\lambda}{\lambda_{1}}\right)\nabla f(x)+\frac{\lambda}{\lambda_{1}}\nabla f(z_{\lambda_{1}}).$$ This gives
$$z_{\lambda}=\nabla f^*((1-\lambda/\lambda_{1})\nabla f(x)+\lambda/\lambda_{1}\nabla f(z_{\lambda_{1}})).$$
As $x\in \bproj{C}(z_{\lambda_{1}})$, (\ref{25:i}) applies.
\end{proof}

It is interesting to point out a connection to the \emph{proximal normal
cone} $N_{C}^{P}(x)$ of $C$ at $x\in C$
Recall
that $$N_{C}^{P}(x):=\{t(y-x):\; t\geq 0, x\in\proj{C}(y), y\in \RR^J\},$$
in which $\proj{C}$ denotes the usual projection on $C$ in terms of Euclidean norm, and each vector $t(y-x)$ is
called a proximal normal to $C$ at $x$;
see, e.g., \cite[Section~1.1]{Yuri} for further information.
\begin{proposition}
Suppose that $f$ is twice continuously differentiable on $U$,
let $y\in U$, and suppose that $x\in\bproj{C}(y)$.
Then $\nabla f(y)-\nabla f(x)\in N_{C}^{P}(x).$
\end{proposition}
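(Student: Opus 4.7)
The plan is to extract from Proposition~\ref{near}(i) a quadratic majorization that identifies $v := \nabla f(y) - \nabla f(x)$ as a proximal normal. By that proposition, $\scal{v}{c-x} \leq D(c,x)$ for every $c \in C$. Recall that a vector $w$ lies in $N_{C}^{P}(x)$ as soon as $\scal{w}{c-x} \leq \sigma\|c-x\|^2$ holds for all $c \in C$ with some $\sigma \geq 0$: indeed, setting $z := x + w/(2\sigma)$ one checks $\|c-z\|^2 - \|x-z\|^2 = \|c-x\|^2 - (1/\sigma)\scal{w}{c-x} \geq 0$ directly, which places $x$ in $\proj{C}(z)$ and rewrites $w = 2\sigma(z-x)$. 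So the task reduces to upgrading the right-hand side $D(c,x)$ to a quadratic in $\|c-x\|$, globally in $c \in C$.

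The twice continuous differentiability of $f$ on $\IDD$ delivers this locally. Since $x$ lies in the open set $\IDD$, I would choose $\delta > 0$ with $B_{\delta}(x) \subset \IDD$ and set $M := \sup\{\|\nabla^2 f(u)\| : u \in B_{\delta}(x)\} < +\infty$. A Taylor expansion of $f$ along the segment $[x,c]$, together with the integral remainder, then gives the uniform estimate $D(c,x) \leq (M/2)\|c-x\|^2$ for $c \in B_{\delta}(x)$. For $c \in C$ with $\|c-x\| \geq \delta$, the local bound is unavailable, but here the Cauchy--Schwarz inequality suffices: $\scal{v}{c-x} \leq \|v\|\,\|c-x\| \leq (\|v\|/\delta)\|c-x\|^2$. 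Choosing $\sigma := \max(M/2,\,\|v\|/\delta)$ combines both regimes into the global quadratic bound $\scal{v}{c-x} \leq \sigma\|c-x\|^2$ for all $c \in C$. The degenerate case $v = 0$ is trivial, since $0 \in N_{C}^{P}(x)$.

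Applying the opening observation with $w = v$ then finishes the argument. The main obstacle is precisely this globalization step: $D(c,x)$ need not behave quadratically in $\|c-x\|$ when $c$ is far from $x$ (for Boltzmann--Shannon or Fermi--Dirac type $f$, $D(\cdot,x)$ can blow up or behave superlinearly on the boundary of $\IDD$), so the inequality from Proposition~\ref{near}(i) alone does not directly match a proximal-normal inequality. The two-case split above—the $C^2$-majorization inside $B_{\delta}(x)$ and Cauchy--Schwarz outside, with $\|c-x\|$ bounded below to convert a linear estimate into a quadratic one—handles this cleanly.
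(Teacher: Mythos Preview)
Your proof is correct and follows the same core strategy as the paper: combine the inequality $\scal{v}{c-x}\leq D(c,x)$ from Proposition~\ref{near}(i) with a Taylor-based quadratic bound on $D(\cdot,x)$ near $x$. The paper, however, stops at the \emph{local} estimate $\scal{v}{c-x}\leq\sigma\|c-x\|^2$ for $c\in C\cap B_\delta(x)$ and then invokes \cite[Proposition~1.1.5(b)]{Yuri}, which states that this local inequality already forces $v\in N_C^P(x)$. Your globalization via Cauchy--Schwarz on $\{c\in C:\|c-x\|\geq\delta\}$ is therefore unnecessary once that local criterion is available, but it has the virtue of making the argument entirely self-contained: you recover the proximal-normal property from first principles by exhibiting the projecting point $z=x+v/(2\sigma)$ explicitly. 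Both routes are short; yours trades a citation for a two-case split.
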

\begin{proof}
By Proposition~\ref{near}(i),
\begin{equation}\label{proxnormal}
D(c,x)\geq \langle \nabla f(y)-\nabla f(x), c-x\rangle \quad \forall c\in C.
\end{equation}
Since the Hessian of $f$ is continuous, using Taylor's formula, we obtain
\begin{equation}\label{taylor}
D(c,x)=f(c)-f(x)-\langle\nabla f(x),c-x\rangle=\frac{1}{2}\langle c-x, \Hess f(\xi)(c-x)\rangle
\quad \mbox{ where $\xi\in [c,x]$}.
\end{equation}
Fix $\delta>0$. Since
$\Hess f$ is continuous on the compact set $C\cap {B}_{\delta}(x)$,
there exists $\sigma=\sigma(x,\delta) >0$
such that $\|\Hess f(\xi)\|\leq 2\sigma$ for every $\xi\in C\cap {B}_{\delta}(x)$. Then (\ref{taylor}) gives
$D(c,x)\leq\sigma \|c-x\|^2$. By (\ref{proxnormal}),
$$\sigma\|c-x\|^2\geq \langle \nabla f(y)-\nabla f(x), c-x\rangle \quad \forall c\in C\cap {B}_{\delta}(x).$$
By \cite[Proposition~1.1.5.(b) on page~25]{Yuri},
$\nabla f(y)-\nabla f(x)\in N_{C}^{P}(x).$
\end{proof}

The following example illustrates the geodesics
$\{z_{\lambda}:\; 0\leq\lambda \leq 1\}$ given by (\ref{geodesics}).
\begin{example}
Let $x=(x_j)_{1\leq j\leq J}$ and $y=(y_j)_{1\leq j\leq J}$ be
two points in $\RR^J$.
\begin{enumerate}
\item If $f\colon x \mapsto \frac{1}{2}\|x\|^2$, then
$\nabla f=\nabla f^*=\Id.$ We have
$$z_{\lambda}=\lambda y+(1-\lambda)x,$$
for $\lambda\in [0,1]$. Hence $z_{\lambda}$ is a component-wise
\emph{arithmetic mean} of $x$ and $y$.
\item
If $f\colon x\mapsto\sum_{j=1}^{J}x_j\ln(x_j)-x_j$, then
$$\nabla f(x)=(\ln x_{1},\ldots, \ln x_{n}),$$
$$f^*\colon x^*\mapsto \sum_{j=1}^{J}\exp(x_{j}^*),$$ so that
$$\nabla f^*(x^*)=(\exp x_{1}^*,\ldots, \exp x_{J}^*).$$
We have
\begin{align}
z_{\lambda} &=\nabla f^*(\lambda \nabla f(y)+(1-\lambda)\nabla f(x))
\\
&=\nabla f^*(\lambda\ln y_{1}+(1-\lambda)\ln x_{1},\ldots, \lambda \ln y_{J}+(1-\lambda)\ln x_{J})\\
&=(\exp(\lambda\ln y_{1}+(1-\lambda)\ln x_{1}),\ldots, \exp(\lambda \ln y_{J}+(1-\lambda)\ln x_{J}))\\
&=(y_{1}^{\lambda}x_{1}^{1-\lambda},\ldots, y_{J}^{\lambda}x_{J}^{1-\lambda}).
\end{align}
Hence $z_{\lambda}$ is a component-wise \emph{geometric mean} of $x$ and $y$.
\item If $f\colon x\mapsto \sum_{j=1}^{J}\exp(x_{j})$, then
$f^*\colon x^*\mapsto\sum_{j=1}^{J}x_j^*\ln(x_j^*)-x_j^*$ so that
$$\nabla f(x)=(\exp(x_{1}),\ldots, \exp(x_{J})),\quad
\nabla f^*(x^*)=(\ln x_{1}^*,\ldots, \ln x_{J}^*).$$ Hence
$$z_{\lambda}=(\ln (\lambda \exp(y_{1})+(1-\lambda) \exp
(x_{1})),\ldots,\ln (\lambda \exp(y_{J})+(1-\lambda) \exp
(x_{J}))).$$
\end{enumerate}
\end{example}
Define the \emph{symmetrization} of $D$ for $x,y\in U$ by
$$S(x,y):=D(x,y)+D(y,x)=\langle \nabla f(x)-\nabla f(y),x-y\rangle.$$
\begin{proposition}
Given $x,y\in U$ and $0<\lambda < 1$, set
$$z_{\lambda}:=\nabla f^*(\lambda\nabla f(y)+(1-\lambda) \nabla f(x)).$$
Then we have
\begin{enumerate}
\item[{\rm (i)}] $D(x,y)=D(x,z_{\lambda})+D(z_{\lambda},y)+\frac{1-\lambda}{\lambda}S(x,z_{\lambda}).$
\item[{\rm (ii)}]
$S(x,y)=\frac{1}{1-\lambda}S(y,z_{\lambda})+\frac{1}{\lambda}S(z_{\lambda},x).$
\end{enumerate}
\end{proposition}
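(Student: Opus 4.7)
The plan is to reduce both identities to algebra in the gradient space by setting $a := \nabla f(x)$, $b := \nabla f(y)$, and $c := \nabla f(z_\lambda)$. By Fact~\ref{isom} we have $\nabla f \circ \nabla f^* = \Id$ on $\RR^J$ (using A2), so
$$ c = \lambda b + (1-\lambda) a, $$
and $z_\lambda \in U$ so that $D(x,z_\lambda)$, $D(z_\lambda,y)$, $S(x,z_\lambda)$, etc.\ are all well defined. The basic linear relations are then
$$ b - a = \tfrac{1}{\lambda}(c - a) \quad \text{and} \quad b - c = \tfrac{1-\lambda}{\lambda}(c - a), $$
and everything else is a matter of organized bookkeeping.

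For part~(i), I would apply the three-point identity (\ref{3point}) with the roles of $x, y$ swapped to the pair $(x,z_\lambda)$ and $(z_\lambda,y)$. Specifically, write the three Bregman distances via (\ref{eq:D}) and compute $D(x,y) - D(x,z_\lambda) - D(z_\lambda,y)$ directly. The $f$-values telescope completely, leaving only the inner-product terms, which collect into
$$ \langle \nabla f(z_\lambda) - \nabla f(y),\, x - z_\lambda \rangle. $$
Using the second gradient relation, $\nabla f(z_\lambda) - \nabla f(y) = -\tfrac{1-\lambda}{\lambda}(c-a) = \tfrac{1-\lambda}{\lambda}(\nabla f(x) - \nabla f(z_\lambda))$, this equals $\tfrac{1-\lambda}{\lambda} \langle \nabla f(x) - \nabla f(z_\lambda), x - z_\lambda \rangle = \tfrac{1-\lambda}{\lambda} S(x,z_\lambda)$, which gives (i).

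For part~(ii), I would expand each $S$ via its definition $S(u,v) = \langle \nabla f(u) - \nabla f(v), u - v\rangle$ and use the gradient relations to factor out $c - a$ from the right-hand side:
$$ \tfrac{1}{1-\lambda} S(y,z_\lambda) = \tfrac{1}{1-\lambda}\langle b-c, y-z_\lambda\rangle = \tfrac{1}{\lambda}\langle c - a, y - z_\lambda\rangle, $$
$$ \tfrac{1}{\lambda} S(z_\lambda, x) = \tfrac{1}{\lambda}\langle c - a, z_\lambda - x\rangle. $$
Adding these and telescoping $(y - z_\lambda) + (z_\lambda - x) = y - x$ gives $\tfrac{1}{\lambda}\langle c - a, y - x\rangle = \langle b - a, y - x\rangle = S(x,y)$, which is~(ii).

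Neither identity presents a genuine conceptual obstacle; they are both pure computations that follow from the definitions together with the affine relation $c = \lambda b + (1-\lambda)a$. The only point of care is the direction of signs in the linear relations and in the symmetrization (which is symmetric in its arguments, so $S(x,z_\lambda) = \langle \nabla f(x) - \nabla f(z_\lambda), x - z_\lambda\rangle$); once those are kept straight, the cancellations are automatic.
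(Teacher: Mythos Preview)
Your proof is correct. For part~(i) your computation is essentially the same as the paper's, just organized in a slightly different order: the paper expands $D(x,z_\lambda)$ and splits it into a $\lambda$-part and a $(1-\lambda)$-part, while you compute the difference $D(x,y)-D(x,z_\lambda)-D(z_\lambda,y)$ and watch the $f$-values telescope; both routes arrive at the same inner-product identity.

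For part~(ii) you take a genuinely different route. The paper obtains~(ii) by applying~(i) a second time with the roles of $x$ and $y$ interchanged (so that $z_\lambda$ is viewed as $\nabla f^*((1-\lambda)\nabla f(x)+\lambda\nabla f(y))$), giving a companion identity for $D(y,x)$; adding the two identities and regrouping yields~(ii). Your argument bypasses~(i) entirely and works directly with the definition of $S$ and the affine relation $c=\lambda b+(1-\lambda)a$, which makes the computation shorter and shows that~(ii) does not actually depend on~(i). The paper's approach, on the other hand, highlights the symmetry between the two identities and explains structurally why the coefficients $\tfrac{1}{1-\lambda}$ and $\tfrac{1}{\lambda}$ appear.
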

\begin{proof}
Since $z_{\lambda}=\nabla f^*(\lambda\nabla f(y)+(1-\lambda) \nabla f(x))$, and
$$D(x,z_{\lambda})=f(x)-f(z_{\lambda})-\langle\nabla f(z_{\lambda}),x-z_{\lambda}\rangle,$$
we have
\begin{align}
D(x,z_{\lambda}) &=f(x)-f(z_{\lambda})-\langle \lambda \nabla f(y)+(1-\lambda)\nabla f(x), x-z_{\lambda}
\rangle\\
& =\lambda [f(x)-f(z_{\lambda})-\langle\nabla f(y),x-z_{\lambda}\rangle]+
(1-\lambda)[f(x)-f(z_{\lambda})-\langle\nabla f(x),x-z_{\lambda}\rangle]\\
&= \lambda [D(x,y)-D(z_{\lambda},y)]-(1-\lambda)[f(z_{\lambda})-f(x)-\langle\nabla f(x),z_{\lambda}-x\rangle]
\\
& =\lambda [D(x,y)-D(z_{\lambda},y)]-(1-\lambda) D(z_{\lambda},x).
\end{align}
Hence
$(1-\lambda)[D(x,z_{\lambda})+D(z_{\lambda},x)]+\lambda D(x,z_{\lambda})+\lambda D(z_{\lambda},y)=\lambda D(x,y).$
Dividing both sides by $\lambda$ yields
\begin{equation}\label{oneeq}
D(x,y)=D(x,z_{\lambda})+D(z_{\lambda},y)+\frac{1-\lambda}{\lambda}S(x,z_{\lambda}),
\end{equation}
which is (i).

To see (ii), we rewrite
$$z_{\lambda}=\nabla f^*((1-\lambda) \nabla f(x)+\lambda\nabla f(y)).$$
Applying (i), we get
\begin{equation}\label{twoeq}
D(y,x)=D(y,z_{\lambda})+D(z_{\lambda},x)+\frac{\lambda}{1-\lambda}S(z_{\lambda},y).
\end{equation}
Adding (\ref{oneeq}) and (\ref{twoeq}), we obtain
\begin{align}
S(x,y)& =D(x,y)+D(y,x)\\
&= [D(z_{\lambda},y)+D(y,z_{\lambda})]+[D(z_{\lambda},x)+D(x,z_{\lambda})]+\frac{1-\lambda}{\lambda}S(x,z_{\lambda})\\
   & \qquad +\frac{\lambda}{1-\lambda} S(z_{\lambda},y)\nonumber\\
   &= \bigg(1+\frac{\lambda}{1-\lambda}\bigg)S(z_{\lambda}, y)+\bigg(1+\frac{1-\lambda}{\lambda}\bigg)
   S(x,z_{\lambda})\\
   &=\frac{1}{1-\lambda}S(y,z_{\lambda})+\frac{1}{\lambda}S(z_{\lambda},x),
   \end{align}
   which is (ii).
\end{proof}

\section{Bregman Nearest Points and Maximal Monotone Operators}
\label{monotone}
We shall need the following pointwise version of a concept
due to Rockafellar and Wets \cite[Definition~1.16]{Rock98}.

\begin{definition}
Let
$g:\RR^J\times \RR^J\rightarrow \RX$ and let $\bar{y}\in\RR^J$. We say
that $g$ is level bounded in the first variable locally uniformly at
$\bar{y}$, if for every $\alpha\in\RR$, there exists $\delta>0$ such that
$$
\bigcup_{y\in B_\delta(\bar{y})}
\{x\in \RR^J\colon g(x,y)\leq \alpha\}\;\;\text{is bounded.}$$
\end{definition}

\begin{proposition}\label{generalcase}
The Bregman distance $D$ is level bounded in the first variable locally
uniformly at every point in $U$.
\end{proposition}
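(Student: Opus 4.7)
The plan is to use the 1-coercivity of $f$ (Assumption A2) together with local boundedness of $\nabla f$ and of $f^*(\nabla f(\cdot))$ on a small ball around $\bar y$.

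Fix $\bar y\in U$ and $\alpha\in\RR$. Since $U$ is open, I will choose $\delta>0$ small enough so that the closed ball $B_\delta(\bar y)$ lies in $U$. Because $f$ is essentially smooth (part of being of Legendre type), $\nabla f$ is continuous on $U$, hence
\[
M := \sup_{y\in B_\delta(\bar y)}\|\nabla f(y)\| < +\infty.
\]
Next, by Assumption A2, $\dom f^* = \RR^J$, so $f^*$ is finite-valued and (being convex and finite on $\RR^J$) continuous. Observing the Fenchel identity $f(y)-\langle\nabla f(y),y\rangle = -f^*(\nabla f(y))$ valid for $y\in U$, continuity of $f^*\circ\nabla f$ on $B_\delta(\bar y)$ yields
\[
K := \sup_{y\in B_\delta(\bar y)}\bigl[\alpha + f(y) - \langle\nabla f(y),y\rangle\bigr] < +\infty.
\]

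Now suppose $y\in B_\delta(\bar y)$ and $x\in\RR^J$ satisfy $D(x,y)\leq\alpha$. Rearranging the definition of $D$ gives
\[
f(x) \;\leq\; \alpha + f(y) - \langle\nabla f(y),y\rangle + \langle\nabla f(y),x\rangle \;\leq\; K + M\,\|x\|.
\]
If $x\neq 0$, divide by $\|x\|$ to obtain $f(x)/\|x\| \leq K/\|x\| + M$. By 1-coercivity of $f$, the left-hand side tends to $+\infty$ as $\|x\|\to\infty$, so this inequality forces $\|x\|$ to remain bounded by a constant depending only on $K$, $M$, and the growth rate of $f$. Thus $\bigcup_{y\in B_\delta(\bar y)}\{x\colon D(x,y)\leq\alpha\}$ is bounded, which is exactly the required property.

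The main step to double-check is the uniform boundedness of $-f^*(\nabla f(y))$ and of $\|\nabla f(y)\|$ over the ball $B_\delta(\bar y)$; both rely crucially on the Legendre hypothesis (yielding continuity of $\nabla f$ on $U$) and on A2 (yielding continuity of $f^*$ on all of $\RR^J$). The rest is essentially a direct application of 1-coercivity to a linear perturbation of $f$.
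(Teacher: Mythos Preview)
Your proof is correct and follows essentially the same strategy as the paper's: bound $\|\nabla f(y)\|$ and $f(y)-\langle\nabla f(y),y\rangle$ uniformly on a compact ball in $U$ via continuity of $\nabla f$, then use 1-coercivity of $f$ to force $\|x\|$ to stay bounded. The only cosmetic differences are that the paper argues by contradiction with sequences while you argue directly, and your detour through the Fenchel identity and continuity of $f^*$ to establish $K<\infty$ is unnecessary (though not wrong), since $f$ and $\nabla f$ are already continuous on $U$.
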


\begin{proof}
Suppose the opposite. Then, for some $\bar{y}\in U$, $\bar{\alpha}\in\RR$, for
every $n\in \{1,2,\ldots\}$, there exist $y_{n}\in U$, $x_{n}\in\dom
f$ such that
$$\|y_{n}-\bar{y}\|<\frac{1}{n},\quad  D(x_{n},y_{n})\leq \bar{\alpha},\quad  \|x_{n}\|>n.$$
We then have $y_{n}\rightarrow\bar{y},$ $\|x_{n}\|\rightarrow\infty$
and
\begin{equation}\label{finitevalue}
 \quad D(x_{n},y_{n})\leq \bar{\alpha}.
\end{equation}
Now
\begin{align}
D(x_{n},y_{n})& =f(x_{n})-f(y_{n})-\scal{\nabla f(y_{n})}{x_{n}-y_{n}}\\
&= f(x_{n})-\scal{\nabla f(y_{n})}{x_{n}}+[-f(y_{n})+\scal{\nabla f(y_{n})}{y_{n}}].\label{timhorton}
\end{align}
Since $f$ is Legendre, $\nabla f$ is continuous on $U$. When $n\rightarrow\infty$, we have
\begin{equation}\label{finitevalue1}
-f(y_{n})+\scal{\nabla f(y_{n})}{y_{n}}\rightarrow -f(\bar{y}) +\scal{\nabla f(\bar{y})}{\bar{y}},
\end{equation}
and
\begin{align}
f(x_{n})-\scal{\nabla f(y_{n})}{x_{n}} &=\|x_{n}\|\left(\frac{f(x_{n})}{\|x_{n}\|}-\langle\nabla f(y_{n}),
\frac{x_{n}}{\|x_{n}\|}\rangle\right)\label{finitevalue2}\\
&\geq \|x_{n}\|\left(\frac{f(x_{n})}{\|x_{n}\|}-\|\nabla f(y_{n})\|\right)\rightarrow\infty,\label{finitevalue3}
\end{align}
since $\|\nabla f(y_{n})\|\rightarrow \|\nabla f(\bar{y})\|$ and
$\lim f(x_{n})/\|x_{n}\|=+\infty.$
(\ref{finitevalue1})-(\ref{finitevalue3}) and (\ref{timhorton}) altogether show that $D(x_{n},y_{n})\rightarrow\infty$, but this contradicts (\ref{finitevalue}).
\end{proof}

The following result will be very useful later.

\begin{theorem}\label{banffday} The following hold.
\begin{enumerate}
\item For each $y\in U$, the set
$\bproj{C}(y)$ is nonempty and compact.
Moreover, $\bD{C}$ is continuous on $U$.
\item If $x_{n}\in \bproj{C}(y_{n})$ and $y_{n}\rightarrow y\in U$, then
the sequence $(x_{n})_{n=1}^{\infty}$ is bounded, and all its cluster points lie in $\bproj{C}(y)$.
\item Let $y\in U$ and $\bproj{C}(y)=\{x\}$. If $x_{n}\in \bproj{C}(y_{n})$ and
$y_{n}\rightarrow y$, then $x_{n}\rightarrow x$; consequently, $\bproj{C}$
is continuous at $y$.
\end{enumerate}
\end{theorem}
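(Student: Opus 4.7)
The plan is to use Proposition~\ref{generalcase} (local uniform level-boundedness) as the main workhorse, together with the fact that $D$ is lower semicontinuous on $\RR^{J}\times\RR^{J}$ and jointly continuous on $\dom f\times U$ (because $f$ is continuous on its domain for the Legendre property and $\nabla f$ is continuous on $U$). The three parts are naturally ordered (i)$\Leftarrow$(ii)$\Rightarrow$(iii), so I would actually prove the boundedness/cluster-point assertion in (ii) first, then use it to finish (i), and finally deduce (iii).

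For the existence and compactness part of (i), I would fix $y\in U$ and any $c_{0}\in C\subset U$, note that $D(c_{0},y)<\infty$, and conclude that $\bproj{C}(y)\subset\{x\in\RR^{J}\colon D(x,y)\le D(c_{0},y)\}$. By Proposition~\ref{generalcase} this sublevel set is bounded, and since $D(\cdot,y)=f(\cdot)-f(y)-\langle\nabla f(y),\cdot-y\rangle$ is lsc on $\RR^{J}$ while $C$ is closed, the restriction of $D(\cdot,y)$ to $C$ attains its minimum and the argmin is closed and bounded, hence compact.

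For (ii), I would fix $c_{0}\in C$ again and use $D(x_{n},y_{n})\le D(c_{0},y_{n})$ together with the continuity of $D(c_{0},\cdot)$ on $U$ (Fact~\ref{isom} gives continuity of $\nabla f$) to obtain a uniform bound $D(x_{n},y_{n})\le D(c_{0},y)+1$ for large $n$. Picking $\delta>0$ from Proposition~\ref{generalcase} with $\alpha=D(c_{0},y)+1$, the sequence $(x_{n})$ eventually lies in the bounded set $\bigcup_{y'\in B_{\delta}(y)}\{x\colon D(x,y')\le\alpha\}$, and so is bounded. For any cluster point $\hat{x}=\lim x_{n_{k}}$, the closedness of $C$ gives $\hat{x}\in C\subset U$; passing to the limit in $D(x_{n_{k}},y_{n_{k}})\le D(c,y_{n_{k}})$ for each $c\in C$, using joint continuity of $D$ on $\dom f\times U$ on the right and lower semicontinuity of $D$ on the left, yields $D(\hat{x},y)\le D(c,y)$, so $\hat{x}\in\bproj{C}(y)$.

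With (ii) in hand I would close the continuity part of (i) as follows: for any $y_{n}\to y$, pick $x^{*}\in\bproj{C}(y)$ (nonempty by the existence part) to obtain $\limsup\bD{C}(y_{n})\le\lim D(x^{*},y_{n})=D(x^{*},y)=\bD{C}(y)$, while for the opposite inequality I would choose $x_{n}\in\bproj{C}(y_{n})$, extract a subsequence along which $\bD{C}(y_{n_{k}})$ converges to $\liminf\bD{C}(y_{n})$ and $x_{n_{k}}\to\hat{x}\in\bproj{C}(y)$ by (ii), and use the joint continuity of $D$ at $(\hat{x},y)$ to conclude $\liminf\bD{C}(y_{n})=\lim D(x_{n_{k}},y_{n_{k}})=D(\hat{x},y)=\bD{C}(y)$. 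Finally (iii) is immediate: $(x_{n})$ is bounded by (ii) and every cluster point must coincide with the unique element of $\bproj{C}(y)$, so the whole sequence converges; since this holds for arbitrary selections $x_{n}\in\bproj{C}(y_{n})$ with $y_{n}\to y$, the single-valued map $\bproj{C}$ is continuous at $y$. The only subtle point I expect is bookkeeping the roles of lsc versus full continuity of $D$ in the first variable—lsc suffices whenever we are taking limits of the quantity we wish to bound from below, but full continuity (which we do have on $\dom f\times U$) is needed in the limit arguments where $\hat{x}$ ends up inside $U$.
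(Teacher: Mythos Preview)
Your argument is correct. The one slightly overstated claim is joint continuity of $D$ on all of $\dom f\times U$; a proper lsc convex function need not be continuous at boundary points of its domain. But you never actually use that: every time you pass to a limit in the first slot, the limit point $\hat{x}$ lies in $C\subset U=\inte\dom f$, where continuity of $f$ (hence of $D$ in both variables) is guaranteed. So the proof stands as written, and your closing remark already flags this.

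The paper takes a different route. Rather than arguing directly, it packages everything into the parametric-minimization framework of Rockafellar--Wets: it sets $g(x,y)=D(x,y)+\iota_{C}(x)+\iota_{B_{\delta}(\bar{y})}(y)$, verifies from Proposition~\ref{generalcase} that $g$ is level-bounded in $x$ locally uniformly in $y$, and then reads off (i)--(iii) from \cite[Theorem~1.17]{Rock98}. Your approach is essentially an unpacked, self-contained version of the same mechanism: you use Proposition~\ref{generalcase} at exactly the same spot (to bound $(x_{n})$), but you prove the compactness, outer-semicontinuity of $\bproj{C}$, and continuity of $\bD{C}$ by hand via limsup/liminf sandwiching instead of citing the abstract theorem. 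The paper's version is shorter and cleaner if one accepts the black box; yours is more elementary and makes explicit where lower semicontinuity versus full continuity of $D$ is actually needed.
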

\begin{proof}  Fix $\bar{y}\in U$ and $\delta>0$ such that
$B_\delta(\bar{y})\subset U$. Consider the proper lower semicontinuous function $g:\RR^J\times \RR^J\rightarrow \RX$ defined by
$$(x,y)\mapsto D(x,y)+\iota_{C}(x)+\iota_{B_\delta(\bar{y})}(y).$$
Observe that $\dom g=C\times B_\delta(\bar{y})$.
For every $y \in \RR^J$ and $\alpha\in\RR$, we have
\begin{equation} \label{e:071209:a}
\{x\in\RR^J \colon g(x,y)\leq\alpha\} =
\begin{cases}
C\cap \{x\in\RR^J\colon D(x,y)\leq\alpha\}, &\text{if $y\in
B_\delta(\bar{y})$;}\\
\varnothing, &\text{otherwise.}
\end{cases}
\end{equation}
We now show that
\begin{equation} \label{e:071209:b}
\text{$g$ is level bounded in the first variable
locally uniformly at every point in $\RR^J$. }
\end{equation}
To this end, fix $\bar{z}\in\RR^J$ and $\alpha\in\RR$.\\
\emph{~~Case 1:} $\bar{z}\notin B_\delta(\bar{y})$.\\
Let $\epsilon>0$ be so small
that $B_\delta(\bar{y})\cap B_\epsilon(\bar{z})=\varnothing$.
Then \eqref{e:071209:a} yields
$$
\bigcup_{z\in B_\epsilon(\bar{z})} \{x\in\RR^J \colon g(x,z)\leq\alpha\} =
\varnothing,
$$
which is certainly bounded.\\
\emph{~~Case 2:} $\bar{z}\in B_\delta(\bar{y})$.\\
Since $B_\delta(\bar{y})\subset U$, we have $\bar{z}\in U$.
Proposition~\ref{generalcase} guarantees the existence of
$\epsilon>0$ such that
$$\bigcup_{z\in B_\epsilon(\bar{z})} \{x\in\RR^J \colon D(x,z)\leq
\alpha\}\;\;\text{is bounded.}$$
In view of \eqref{e:071209:a}, the set
$$\bigcup_{z\in B_\epsilon(\bar{z})\cap B_\delta(\bar{y})}
C \cap \{x\in\RR^J\colon D(x,z)\leq \alpha\} = \bigcup_{z\in
B_\epsilon(\bar{z})}\{x\in\RR^J \colon g(x,z)\leq \alpha\}
$$
is bounded as well.

Altogether, we have verified \eqref{e:071209:b}.

Define a function $m$ at
$y\in\RR^J$ by
$$m(y):=\inf_{x\in\RR^J} g(x,y)=\begin{cases}
\inf_{x\in C}D(x,y)=\bD{C}(y), &
\text{if $y\in B_\delta(\bar{y})$;}\\
{+\infty}, &\text{otherwise.}
\end{cases}$$
Then $m = \bD{C} + \iota_{B_\delta(\bar{y})}$ and
 $$\argmin_{x\in\RR^J}g(x,y)=\begin{cases}
\argmin_{x\in C}D(x,y)=\bproj{C}(y), &
\text{if $y\in B_\delta(\bar{y})$;}\\
\varnothing, &\text{otherwise.}
\end{cases}
$$
Now \eqref{e:071209:b} and
\cite[Theorem~1.17(a)]{Rock98} implies that
if $y\in B_\delta(\bar{y})$, then $\bproj{C}(y)$ is nonempty and compact.
In particular, $\bproj{C}(\bar{y})\neq\varnothing$ and compact.
Take $\bar{x}\in\bproj{C}(\bar{y})$. As
$$g(\bar{x},\cdot)=D(\bar{x},\cdot)+\iota_{B_\delta(\bar{y})}$$
is continuous at $\bar{y}$, by \cite[Theorem~1.17(c)]{Rock98},
the function $m$ is continuous at $\bar{y}$.
Hence $\bD{C}$ is continuous at $\bar{y}$.
Since $\bar{y}\in U$ is arbitrary, this proves (i). Next,
\cite[Theorem~1.17(b)]{Rock98} gives (ii) since
$\bD{C}$ is continuous on $U$.

Finally, (iii) is an immediate consequence of (ii).
\end{proof}

Our next result states that $\bproj{C}\circ\nabla f^*$ is a monotone operator.
This is also related to \cite[Proposition~3.32.(ii)(c)]{Sico03},
which establish a stronger property when $C$ is convex.

\begin{proposition}\label{monotoneyes}
For every $x,y$ in $U$,
\begin{equation}\label{seehow}
 \langle \bproj{C}(y)-\bproj{C}(x),\nabla f(y)-\nabla f(x)\rangle\geq 0;
\end{equation}
consequently, $\bproj{C}\circ \nabla f^*$ is monotone.
\end{proposition}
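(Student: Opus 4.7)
The plan is to deduce the inequality by applying the variational characterisation of Bregman nearest points from Proposition~\ref{near}(i) twice, once with the roles of $x,y$ swapped, and then adding the two resulting inequalities so that the symmetric Bregman distance $S$ cancels.

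Fix $x,y\in U$, and pick any $u\in\bproj{C}(y)$ and $v\in\bproj{C}(x)$, both of which are nonempty by Theorem~\ref{banffday}(i). Proposition~\ref{near}(i), applied to the pair $(u,y)$ and evaluated at $c=v\in C$, gives
\[
D(v,u)\ \geq\ \langle \nabla f(y)-\nabla f(u),\, v-u\rangle,
\]
and applied to the pair $(v,x)$ and evaluated at $c=u\in C$, gives
\[
D(u,v)\ \geq\ \langle \nabla f(x)-\nabla f(v),\, u-v\rangle.
\]
Adding these two inequalities, the left-hand side becomes $S(u,v)=\langle \nabla f(u)-\nabla f(v),u-v\rangle$, while on the right-hand side, grouping the two terms involving $\nabla f(u)-\nabla f(v)$ likewise reproduces $S(u,v)$, which then cancels with the left-hand side. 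What remains is precisely
\[
0\ \leq\ \langle \nabla f(y)-\nabla f(x),\, u-v\rangle,
\]
that is, $\langle \bproj{C}(y)-\bproj{C}(x),\nabla f(y)-\nabla f(x)\rangle\geq 0$, establishing \eqref{seehow}.

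To obtain monotonicity of $\bproj{C}\circ\nabla f^*$, I would then substitute. By Assumption~A2 we have $\dom f^*=\RR^J$, and by Fact~\ref{isom} $\nabla f^*:\RR^J\to U$ is a bijection with $\nabla f\circ\nabla f^*=\Id$. So, given $x^*,y^*\in\RR^J$, set $x:=\nabla f^*(x^*)$ and $y:=\nabla f^*(y^*)$ in $U$. Then $\nabla f(y)-\nabla f(x)=y^*-x^*$, and \eqref{seehow} reads
\[
\langle (\bproj{C}\circ\nabla f^*)(y^*)-(\bproj{C}\circ\nabla f^*)(x^*),\, y^*-x^*\rangle\ \geq\ 0,
\]
which is the monotonicity claim.

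There is no real obstacle: the only subtle point is to make sure the characterisation \eqref{same} is used symmetrically in $(u,y)$ and $(v,x)$ and evaluated at the ``other'' projection, so that the $S(u,v)$ terms on the two sides match and cancel cleanly. Nonconvexity of $C$ is harmless because \eqref{same}, unlike \eqref{convexpart}, already incorporates the curvature term $D(\cdot,\cdot)$ on the right-hand side, which is exactly what is needed for the cancellation to go through.
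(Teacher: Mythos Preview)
Your proof is correct and essentially the same as the paper's: the paper works directly from the definition of $\bproj{C}$ (i.e., $D(v,y)\geq D(u,y)$ and $D(u,x)\geq D(v,x)$) and the identity \eqref{3point}, obtaining two inequalities in which the $f(u),f(v)$ terms cancel upon addition, while you route through the equivalent characterisation \eqref{same} of Proposition~\ref{near}(i), so that the same cancellation appears as the vanishing of $S(u,v)$ on both sides. The monotonicity deduction via Fact~\ref{isom} and Assumption~A2 is identical to the paper's.
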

\begin{proof} Since
$$D(\bproj{C}(x),y)\geq D(\bproj{C}(y),y),\quad D(\bproj{C}(y),x)\geq D(\bproj{C}(x),x),$$
we use (\ref{3point}) to get
$$f(\bproj{C}(x))-f(\bproj{C}(y))-\langle\nabla f(y), \bproj{C}(x)-\bproj{C}(y)\rangle\geq 0,$$
$$f(\bproj{C}(y))-f(\bproj{C}(x))-\langle\nabla f(x), \bproj{C}(y)-\bproj{C}(x)\rangle\geq 0.$$
Adding these inequalities yields
$$\langle\nabla f(y), \bproj{C}(y)-\bproj{C}(x)\rangle -\langle\nabla f(x), \bproj{C}(y)-\bproj{C}(x)\geq 0,
$$
i.e., \eqref{seehow}. The monotonicity now follows from Fact~\ref{isom}
and our assumption that $\dom f^*=\RR^J$.
\end{proof}

\begin{definition}
The set $C$ is \emph{Chebyshev with respect to the left Bregman distance},
or simply \bDC,
if for every $x\in U$,
$\bproj{C}(x)$ is nonempty and a singleton.
\end{definition}
For some instances of $f$, it is known that if $C$ is convex,
then it is \bDC\ (see, e.g., \cite[Theorem~3.14]{Baus97})
and $\bproj{C}$ is continuous (see, e.g., \cite[Proposition~3.10(i)]{noll}).
The next result is a refinement.

\begin{proposition}\label{maximalcase}
Suppose that $C$ is \bDC. Then
$\bproj{C}: U\rightarrow C$ is continuous.
Hence $\bproj{C}\circ \nabla f^*$ is continuous and
maximal monotone.
\end{proposition}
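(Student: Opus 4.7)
The plan is to assemble the statement from three ingredients already in the paper plus one standard fact about monotone operators.

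First, I would establish continuity of $\bproj{C}$ on $U$. Since $C$ is \bDC, for every $y\in U$ the set $\bproj{C}(y)$ is a singleton. Theorem~\ref{banffday}(iii) then applies at every point $y\in U$, giving continuity of $\bproj{C}\colon U\to C$. No further work is needed for this step.

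Next, I would promote this to continuity of $\bproj{C}\circ\nabla f^*$ on $\RR^J$. By assumption A2, $\dom f^*=\RR^J$, and since $f$ is Legendre, Fact~\ref{isom} says $f^*$ is Legendre as well, so $\nabla f^*\colon\RR^J\to U$ is a topological isomorphism (in particular, continuous and surjective onto $U$). Composition with the continuous map $\bproj{C}\colon U\to C\subset U\subset\RR^J$ produces a continuous, single-valued, everywhere-defined operator $\bproj{C}\circ\nabla f^*\colon\RR^J\to\RR^J$. Its monotonicity is exactly the content of Proposition~\ref{monotoneyes}.

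Finally, the only nontrivial ingredient is maximal monotonicity. Here I would invoke the classical fact (essentially Minty/Kato) that a single-valued monotone operator $T\colon\RR^J\to\RR^J$ which is continuous and defined on all of $\RR^J$ is automatically maximal monotone. A short self-contained argument: if $(u,v)$ is monotonically related to the graph of $T=\bproj{C}\circ\nabla f^*$, then for every $w\in\RR^J$ and $t>0$, taking $x=u+tw$ gives $\langle v-T(u+tw),\,-tw\rangle\ge 0$, i.e.\ $\langle T(u+tw)-v,w\rangle\ge 0$; letting $t\downarrow 0$ and using continuity of $T$ yields $\langle T(u)-v,w\rangle\ge 0$ for all $w\in\RR^J$, hence $v=T(u)$ and $(u,v)\in\gr T$.

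There is no real obstacle here; the main point worth being careful about is that A2 is what makes $\nabla f^*$ globally defined and continuous on $\RR^J$, which is essential for both the everywhere-definedness and the continuity hypothesis in the maximality argument.
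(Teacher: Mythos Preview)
Your proof is correct and follows essentially the same approach as the paper: continuity of $\bproj{C}$ from Theorem~\ref{banffday}(iii), continuity of the composition via Fact~\ref{isom} and A2, monotonicity from Proposition~\ref{monotoneyes}, and then maximal monotonicity from the standard fact that a continuous single-valued monotone operator with full domain is maximal. The only cosmetic difference is that the paper cites \cite[Example~12.7]{Rock98} for the last step, whereas you supply the short Minty-type argument directly.
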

\begin{proof}
While the continuity of $\bproj{C}$ follows from Theorem~\ref{banffday}(iii), Proposition~\ref{monotoneyes} shows that $\bproj{C}\circ \nabla f^*$ is
monotone. Since $\bproj{C}$ is continuous on $U$ and
$\nabla f^*:\RR^J\rightarrow
U $ is continuous, we conclude that $\bproj{C}\circ\nabla f^*$ is continuous on $\RR^J$.
Altogether, since $\bproj{C}\circ \nabla f^*$ is single-valued, it is
maximal monotone on $\RR^J$ by \cite[Example~12.7]{Rock98}.
\end{proof}

Rockafellar's well-known result on the virtual convexity of the
range of a maximal monotone operator allows us to show that \bDC\ sets
are convex. Our proof extends a Hilbert space technique due to Berens and
Westphal \cite{BW}.

\begin{theorem}[\bDC\ sets are convex]
Suppose that $C$ is \bDC. Then $C$ is convex.
\end{theorem}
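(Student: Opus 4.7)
The plan is to realize $C$ as the range of a single-valued maximal monotone operator on $\RR^J$ and then invoke Rockafellar's theorem on the virtual convexity of ranges of maximal monotone operators.

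Concretely, I would set $T:=\bproj{C}\circ\nabla f^*$. By Assumption~A2 together with Fact~\ref{isom}, $\nabla f^*$ is a continuous bijection from $\RR^J$ onto $\IDD$; hence under the \bDC\ hypothesis Proposition~\ref{maximalcase} gives that $T$ is a single-valued, continuous, maximal monotone operator defined on all of $\RR^J$. The second step is to verify that $\ran T=C$. The inclusion $\ran T\subset C$ is built into the definition of $\bproj{C}$. For the reverse, given $c\in C\subset\IDD$, set $y:=\nabla f(c)$; by Fact~\ref{isom}, $y\in\RR^J$ and $\nabla f^*(y)=c$. Since $D(c,c)=0$ and $D$ is nonnegative, $c$ minimizes $D(\cdot,c)$ over $C$, and the \bDC\ hypothesis forces $\bproj{C}(c)=\{c\}$. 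Thus $T(y)=c$, so $C\subset\ran T$.

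Rockafellar's virtual convexity theorem now asserts that for a maximal monotone operator on $\RR^J$ the set $\closu\ran T$ is convex; equivalently, $\closu\conv\ran T=\closu\ran T$. Combining this with Assumption~A3 (so that $\closu C=C$) and the identity $\ran T=C$ just established yields
$$\conv C\subset\closu\conv\ran T=\closu\ran T=\closu C=C,$$
so that $\conv C=C$ and $C$ is convex.

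The main conceptual step is recognizing that composing $\bproj{C}$ with $\nabla f^*$ undoes the $f$-distortion and turns the geometrically meaningful Bregman projector into an honest monotone self-map of $\RR^J$ whose range is precisely $C$; once this observation is in place, everything reduces to bookkeeping and the invocation of two external ingredients, Proposition~\ref{maximalcase} and Rockafellar's virtual convexity theorem. The two prerequisites that make the strategy succeed --- surjectivity of $\nabla f^*$ onto $\IDD$ and continuity of $\bproj{C}$ (which upgrades monotonicity to maximality) --- rest respectively on the $1$-coercivity of $f$ from Assumption~A2 and on the \bDC\ hypothesis via Theorem~\ref{banffday}(iii).
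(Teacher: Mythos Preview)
Your proof is correct and follows essentially the same route as the paper: set $T=\bproj{C}\circ\nabla f^*$, invoke Proposition~\ref{maximalcase} for maximal monotonicity, identify $\ran T=C$ via $\bproj{C}(c)=c$ for $c\in C$, and apply Rockafellar's virtual convexity theorem together with the closedness of $C$. The paper's verification that $\ran T=C$ is phrased as the chain $C\supset\bproj{C}(\nabla f^*(\RR^J))=\bproj{C}(U)\supset\bproj{C}(C)=C$, but this is the same argument as yours.
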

\begin{proof}
By Proposition~\ref{maximalcase}, $\bproj{C}\circ \nabla f^*$ is a
maximal monotone operator on $\RR^J$. Using
\cite[Theorem 12.41]{Rock98} (or \cite[Theorem~19.2]{Simons}), $\closu [\ran
\bproj{C}\circ \nabla f^*]$ is convex. Since $ \ran \nabla f^*=U$
and $C\subset U $,  it follows that
$$C\supset\ran\big(\bproj{C}\circ \nabla f^*\big)=\bproj{C}(\nabla f^*(\RR^J))= \bproj{C}(U)\supset \bproj{C}(C)=C,$$
from which $\closu [\ran \bproj{C}\circ \nabla f^*]=\closu C=C.$
Hence $C$ is convex.
\end{proof}

\begin{corollary}
The set $C$ is \bDC\ if and only if it is convex.
\end{corollary}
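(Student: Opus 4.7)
The statement has two directions. The forward implication — a \bDC{} set is convex — is exactly the theorem immediately preceding this corollary, so my plan is just to invoke it. All the work lies in the reverse implication: assuming $C$ is convex (in addition to the standing hypothesis A3 that it is a nonempty closed subset of $U$), I have to show that for every $y \in U$ the set $\bproj{C}(y)$ is both nonempty and a singleton.

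Nonemptiness is already in hand: Theorem~\ref{banffday}(i) guarantees that $\bproj{C}(y)$ is nonempty (and in fact compact) for every $y \in U$, without any convexity assumption on $C$. So the only thing my proof actually needs to produce is uniqueness under the added hypothesis that $C$ is convex. The natural tool is the variational characterization \eqref{convexpart} of Proposition~\ref{near}(i), which is stated precisely for the convex case.

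My plan for uniqueness is the following. Suppose $x_1, x_2 \in \bproj{C}(y)$. Applying \eqref{convexpart} once with $x = x_1$ and $c = x_2$, and once with $x = x_2$ and $c = x_1$, I obtain two inequalities whose sum causes the $\nabla f(y)$ terms to cancel, leaving $\langle \nabla f(x_1) - \nabla f(x_2),\, x_1 - x_2 \rangle \leq 0$. Assumption A1 says $f$ is essentially strictly convex and differentiable on $U$, which makes $\nabla f$ strictly monotone on $U$, and hence forces $x_1 = x_2$.

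I do not foresee any real obstacle here; the corollary is essentially a bookkeeping consequence of the preceding theorem and of Proposition~\ref{near}(i) together with Theorem~\ref{banffday}(i). If a subtlety arises, it will be only in articulating the strict monotonicity of $\nabla f$ on $U$ cleanly — and even this can be bypassed by arguing via the strict convexity of $D(\cdot, y)$ on the convex set $C$, which follows from the essential strict convexity of $f$ combined with the affine-in-$x$ structure of the remaining terms in the definition \eqref{eq:D} of $D$.
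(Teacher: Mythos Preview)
Your proposal is correct. The paper states this corollary without proof, treating both directions as immediate: the forward direction is the preceding theorem, and the reverse direction (convex $\Rightarrow$ \bDC) is referenced later in Theorem~\ref{mainpart} as a known fact from \cite[Theorem~3.12]{Baus97}.

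Your approach differs only in that you give a self-contained argument for the reverse implication using the paper's own tools---Theorem~\ref{banffday}(i) for nonemptiness and Proposition~\ref{near}(i) (specifically \eqref{convexpart}) together with the strict monotonicity of $\nabla f$ on $U$ for uniqueness---rather than citing the external reference. Your computation is right: summing the two instances of \eqref{convexpart} indeed yields $\langle \nabla f(x_1)-\nabla f(x_2),\,x_1-x_2\rangle \leq 0$, and essential strict convexity of $f$ on the open convex set $U$ makes $\nabla f$ strictly monotone there, forcing $x_1=x_2$. Your alternative route via the strict convexity of $x\mapsto D(x,y)$ on $C$ is even more direct and is exactly how such results are typically proved; either version is fine.
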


\section{Subdifferentiabilities of Bregman Distances}\label{clarkemor}

Let us show that $\bD{C}$  is locally Lipschitz on $U$.

\begin{proposition}\label{distance}
Suppose $f$ is twice continuously differentiable on $U$.
Then the left Bregman distance function satisfies
\begin{equation}
\bD{C} =f^*\circ\nabla f-(f+\iota_{C})^*\circ \nabla f
=[f^*-(f+\iota_{C})^*]\circ \nabla f,
\end{equation}
and it is locally Lipschitz on $U$.
\end{proposition}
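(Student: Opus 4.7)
The plan is to derive the identity first by a direct algebraic manipulation using the Fenchel conjugate, and then invoke standard facts about continuity of convex functions with full domain together with the smoothness of $\nabla f$ to conclude local Lipschitz continuity.

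First I would rewrite the infimum by peeling off the terms that do not depend on $x$:
\begin{align*}
\bD{C}(y) &= \inf_{x\in C}\bigl[f(x)-f(y)-\scal{\nabla f(y)}{x-y}\bigr]\\
&= -f(y)+\scal{\nabla f(y)}{y}-\sup_{x\in\RR^J}\bigl[\scal{\nabla f(y)}{x}-(f+\iota_{C})(x)\bigr]\\
&= -f(y)+\scal{\nabla f(y)}{y}-(f+\iota_{C})^{*}(\nabla f(y)).
\end{align*}
The Fenchel equality $f^{*}(\nabla f(y))=\scal{\nabla f(y)}{y}-f(y)$, valid because $\nabla f(y)\in\partial f(y)$ for $y\in U$, then yields the claimed formula $\bD{C}=[f^{*}-(f+\iota_{C})^{*}]\circ\nabla f$.

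For the Lipschitz part, I would verify that both conjugates are finite-valued on $\RR^{J}$, so that they are automatically locally Lipschitz by \cite[Theorem~10.4]{Rock70} (continuity of a finite convex function). The function $f^{*}$ has $\dom f^{*}=\RR^{J}$ by assumption A2. For $(f+\iota_{C})^{*}$, I would fix $y^{*}\in\RR^{J}$ and observe
$$(f+\iota_{C})^{*}(y^{*})=\sup_{x\in C}\bigl[\scal{y^{*}}{x}-f(x)\bigr]\leq\sup_{x\in\RR^{J}}\bigl[\scal{y^{*}}{x}-f(x)\bigr]=f^{*}(y^{*})<+\infty,$$
so $\dom(f+\iota_{C})^{*}=\RR^{J}$ and $(f+\iota_{C})^{*}$ is also locally Lipschitz on $\RR^{J}$.

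Finally, because $f$ is twice continuously differentiable on $U$, the map $\nabla f\colon U\to\RR^{J}$ is $C^{1}$ and therefore locally Lipschitz on $U$. Composing a locally Lipschitz map with a locally Lipschitz difference of convex functions yields a locally Lipschitz function on $U$, completing the proof. The only step that requires a small argument is verifying $\dom(f+\iota_{C})^{*}=\RR^{J}$, but this follows at once from the 1-coercivity hypothesis A2 as shown above; everything else is a direct application of standard convex-analytic continuity results.
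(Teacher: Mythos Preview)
Your proof is correct and follows essentially the same route as the paper: both derive the identity by rewriting the infimum via the Fenchel conjugate and the relation $f^{*}(\nabla f(y))=\scal{\nabla f(y)}{y}-f(y)$, then argue local Lipschitz continuity from the fact that $(f+\iota_{C})^{*}\leq f^{*}$ forces $\dom(f+\iota_{C})^{*}=\RR^{J}$, so both conjugates are locally Lipschitz, and finally compose with the locally Lipschitz map $\nabla f$ (which the paper justifies via the Mean Value Theorem and continuity of $\Hess f$, equivalent to your observation that $\nabla f$ is $C^{1}$). The only cosmetic difference is the order of presentation.
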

\begin{proof}
The Mean Value Theorem and the continuity of $\Hess f$ on $U$ imply
that $\nabla f$ is locally Lipschitz on $U$.
For $y\in U$,
\begin{align}
\bD{C}(y) &=\inf_{c\in C}[f(c)-f(y)-\langle\nabla f(y),c-y\rangle]\\
&=\inf_{c}[(f+\iota_{C})(c)-\langle\nabla f(y),c\rangle +f^*(\nabla f(y))]\\
&=f^*(\nabla f(y))-\sup_{c}[\langle\nabla f(y),c\rangle-(f+\iota_{C})(c)]\\
& =f^*(\nabla f (y))-(f+\iota_{C})^*(\nabla f(y)).
\end{align}
Note that $f+\iota_{C}\geq f$, $(f+\iota_{C})^*\leq f^*$, so $\dom
f^*\subset \dom (f+\iota_{C})^*$. Being convex functions, both
$(f+\iota_{C})^*$ and $f^*$ are locally Lipschitz on interior of
their respective domains, in particular on $\inte\dom f^*=\RR^J$. Since
$\nabla f: U \rightarrow\RR^J$ is locally Lipschitz, we
conclude that $\bD{C}$ is locally Lipschitz on $U$.
\end{proof}

For a function $g$ that is finite and locally Lipschitz at a point $y$,
we define
the \emph{Dini subderivative} and \emph{Clarke subderivative} of $g$ at $y$ in the direction $w$, denoted respectively
by $\deriv g(y)(w)$ and $\rockderiv g (y)(w)$, via
$$\deriv g(y)(w):=\liminf_{t\downarrow 0}\frac{g(y+tw)-g(y)}{t},$$
$$\rockderiv g(y)(w):=\limsup_{\stackrel{x\rightarrow y}{t\downarrow 0}}\frac{g(x+tw)-g(x)}{t},$$
and the corresponding \emph{Dini subdifferential} and
\emph{Clarke subdifferential} via
$$\hat{\partial} g(y): =\{y^*\in\RR^J: \langle y^*,w\rangle\leq \deriv g(y)(w),\; \forall w\in\RR^J\},$$
$$\overline{\partial} g(y): =\{y^*\in\RR^J: \langle y^*,w\rangle\leq \rockderiv g(y)(w),\; \forall w\in\RR^J\}.$$
Furthermore, the \emph{limiting subdifferential} is
defined by
$$\partial_L g(y):=\limsup_{x\rightarrow y}\hat{\partial}
g(x),$$ see \cite[Definition~8.3]{Rock98}. We say that $g$ is \emph{Clarke
regular} at $y$ if $\deriv g(y)(w)=\rockderiv g(y)(w)$ for every
$w\in \RR^J$, or equivalently $\hat{\partial}
g(y)=\overline{\partial} g(y)$.
For further properties
of these subdifferentials and subderivatives,
see \cite{Frank,mordukhovich,Rock98}.

We now study the
subdifferentiability of $\bD{C}$ in terms of $\bproj{C}$.
\begin{proposition}\label{subdiff1}
Suppose $f$ is twice continuously differentiable on $U$.
Then the function $-\bD{C}$ is Dini subdifferentiable on
$U$; more precisely, if $y\in U$, then
$$\Hess f(y)[\bproj{C}(y)-y]\subset \hat{\partial} (-\bD{C})(y),$$
and thus
\begin{equation}\label{tom1}
\Hess f(y)[\conv\bproj{C}(y)-y]\subset \hat{\partial} (-\bD{C})(y).
\end{equation}
\end{proposition}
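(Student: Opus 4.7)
The plan is to work directly from the definitions. Fix $y\in U$, take any $p\in\bproj{C}(y)$, and set $y^*:=\Hess f(y)(p-y)$. To prove $y^*\in\hat\partial(-\bD{C})(y)$ I must show
\[
\langle y^*,w\rangle\;\leq\;\deriv(-\bD{C})(y)(w)
\;=\;\liminf_{t\downarrow 0}\frac{\bD{C}(y)-\bD{C}(y+tw)}{t}
\qquad \forall w\in\RR^J.
\]

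The key device is the pointwise upper bound
\[
\bD{C}(y+tw)\;\leq\;D(p,y+tw),\qquad \bD{C}(y)\;=\;D(p,y),
\]
valid for every $t$ with $y+tw\in U$ (which holds for all sufficiently small $t>0$ since $U$ is open). This turns a nonsmooth quantity into a smooth one along the ray, because for fixed $p$ the map $z\mapsto D(p,z)=f(p)-f(z)-\langle\nabla f(z),p-z\rangle$ is $C^2$ on $U$ by the standing hypothesis on $f$.

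Next I would Taylor-expand $D(p,y+tw)$ around $t=0$. Using $f(y+tw)=f(y)+t\langle\nabla f(y),w\rangle+o(t)$ and $\nabla f(y+tw)=\nabla f(y)+t\Hess f(y)w+o(t)$, a routine cancellation yields
\[
D(p,y+tw)-D(p,y)\;=\;-t\,\langle\Hess f(y)w,\,p-y\rangle+o(t),
\]
so that
\[
\frac{\bD{C}(y+tw)-\bD{C}(y)}{t}\;\leq\;\frac{D(p,y+tw)-D(p,y)}{t}\;\longrightarrow\;-\langle y^*,w\rangle.
\]
Passing to $\limsup_{t\downarrow 0}$ on the left gives $\limsup_{t\downarrow 0}\bigl[\bD{C}(y+tw)-\bD{C}(y)\bigr]/t\leq -\langle y^*,w\rangle$, which is exactly $\langle y^*,w\rangle\leq\deriv(-\bD{C})(y)(w)$. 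Hence $y^*\in\hat\partial(-\bD{C})(y)$, proving the first inclusion.

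The convex-hull version \eqref{tom1} is then automatic: the Dini subdifferential is always convex, since $\hat\partial(-\bD{C})(y)=\bigcap_{w\in\RR^J}\{y^*:\langle y^*,w\rangle\leq\deriv(-\bD{C})(y)(w)\}$ is an intersection of closed half-spaces. Thus the convex hull of $\Hess f(y)[\bproj{C}(y)-y]$ — which equals $\Hess f(y)[\conv\bproj{C}(y)-y]$ by linearity of $\Hess f(y)$ — is also contained in $\hat\partial(-\bD{C})(y)$.

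I do not expect a genuine obstacle here: the only piece requiring care is the $C^2$ Taylor expansion that produces the correct first-order coefficient $-\langle\Hess f(y)(p-y),w\rangle$, and even this is bookkeeping once one writes the two occurrences of $\nabla f$ in $D(p,y+tw)-D(p,y)$ and tracks the cancellation of the $\langle\nabla f(y),w\rangle$ terms. The nonemptiness of $\bproj{C}(y)$, which is needed for the argument to have content, is supplied by Theorem~\ref{banffday}(i).
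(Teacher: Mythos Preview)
Your proof is correct and follows essentially the same approach as the paper: both use the key inequality $\bD{C}(y+tw)\leq D(p,y+tw)$ together with $\bD{C}(y)=D(p,y)$, expand the smooth difference $D(p,y+tw)-D(p,y)$ to first order to extract the term $-t\langle\Hess f(y)(p-y),w\rangle$, and then invoke the convexity of $\hat\partial$ for \eqref{tom1}. The only cosmetic difference is that you package the first-order expansion via Taylor's theorem with $o(t)$ remainder, whereas the paper writes out the difference explicitly and passes to the limit term by term.
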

\begin{proof} Fix $y\in U$.
By Theorem~\ref{banffday}(i), $\bproj{C}(y)\neq \varnothing$. Let $x\in
\bproj{C}(y).$ As $\hat{\partial}$ is convex-valued, it suffices to show that
\begin{equation}\label{subnegative}
\Hess f(y)(x-y)\in \hat{\partial} (-\bD{C})(y).
\end{equation}
To this end, let $t>0$ and $w\in \RR^J$. Since for sufficiently small $t$, $y+tw\in U$,
\begin{align}-\bD{C}(y+tw) &=\sup_{c\in C}\big(-f(c)+f(y+tw)+\langle\nabla
f(y+tw),c-(y+tw)\rangle\big)\\
&\geq -f(x)+f(y+tw)+\langle\nabla f(y+tw),x-(y+tw)\rangle
\end{align}
and
\begin{equation}
\bD{C}(y)=f(x)-f(y)-\langle\nabla f(y),x-y\rangle,
\end{equation}
we have
$$
-\bD{C}(y+tw)+\bD{C}(y)\geq f(y+tw)-f(y)+\langle \nabla f(y+tw)-\nabla f(y),x-y\rangle
+\langle \nabla f(y+tw),-tw\rangle.
$$
Dividing both sides by $t$ and taking the limit inferior
as $t\downarrow 0$, we have
\begin{align}
\deriv (-\bD{C})(y)(w) &\geq \langle\nabla f(y),w\rangle +\langle \Hess f(y) w, x-y\rangle
-\langle \nabla f(y),w\rangle\\
&=\langle \Hess f(y)(x-y), w\rangle,
\end{align}
which gives (\ref{subnegative}).
\end{proof}

\begin{lemma}\label{subdiff2}
Suppose that $f$ is twice continuously differentiable on $U$,
let $y\in U$, and suppose that $\bproj{C}(y)$ is a singleton.
Then $\bD{C}$ is Dini subdifferentiable at $y$ and
\begin{equation}\label{amy}
\Hess f(y)(y-\bproj{C}(y))\in \hat{\partial} \bD{C}(y).
\end{equation}
\end{lemma}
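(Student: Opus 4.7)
My plan is to show directly from the Dini subderivative definition that $\deriv \bD{C}(y)(w) \ge \langle \Hess f(y)(y-\bproj{C}(y)),\, w\rangle$ for every $w\in \RR^J$, which by definition of $\hat{\partial}$ gives \eqref{amy}. The structure mirrors Proposition~\ref{subdiff1}, but because we now want a lower bound on $\bD{C}(y+tw)-\bD{C}(y)$ rather than on its negative, the roles of the fixed and varying projection points must be swapped.

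Let $x=\bproj{C}(y)$, fix $w\in\RR^J$, and for all $t>0$ small enough so that $y+tw\in U$, select any $x_t\in \bproj{C}(y+tw)$, which is nonempty by Theorem~\ref{banffday}(i). The first key step is to use the suboptimality of $x_t$ at $y$ to write
\begin{equation*}
\bD{C}(y) \le D(x_t,y)=f(x_t)-f(y)-\langle\nabla f(y),x_t-y\rangle,
\end{equation*}
together with the identity $\bD{C}(y+tw)=D(x_t,y+tw)$. Subtracting yields
\begin{equation*}
\bD{C}(y+tw)-\bD{C}(y)\ge D(x_t,y+tw)-D(x_t,y),
\end{equation*}
and an expansion gives
\begin{equation*}
\bD{C}(y+tw)-\bD{C}(y)\ge -[f(y+tw)-f(y)] + t\langle\nabla f(y+tw),w\rangle -\langle\nabla f(y+tw)-\nabla f(y),\, x_t-y\rangle.
\end{equation*}

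The second step is to divide by $t$ and let $t\downarrow 0$. Here is where the singleton hypothesis enters: since $\bproj{C}(y)=\{x\}$, Theorem~\ref{banffday}(iii) forces $x_t\to x$ as $t\downarrow 0$ regardless of which selection $x_t\in\bproj{C}(y+tw)$ we make. Combined with differentiability of $f$ (first term $\to \langle\nabla f(y),w\rangle$), continuity of $\nabla f$ (second term $\to \langle\nabla f(y),w\rangle$), and $C^1$-smoothness of $\nabla f$ which is granted by $f\in C^2(U)$ (the difference quotient of $\nabla f$ tends to $\Hess f(y)w$), we obtain
\begin{equation*}
\liminf_{t\downarrow 0}\frac{\bD{C}(y+tw)-\bD{C}(y)}{t}\ge -\langle \Hess f(y)w,\,x-y\rangle=\langle \Hess f(y)(y-x),\,w\rangle.
\end{equation*}
Since this holds for every $w$, the vector $\Hess f(y)(y-x)$ belongs to $\hat{\partial}\bD{C}(y)$, which is \eqref{amy}.

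The main potential obstacle is the convergence $x_t\to x$: without the singleton assumption we would only get boundedness of $(x_t)$ and cluster points in $\bproj{C}(y)$ via Theorem~\ref{banffday}(ii), which would not be enough to pass to the limit in the term $\langle\nabla f(y+tw)-\nabla f(y),\,x_t-y\rangle$ (we would need a uniform argument taking the worst cluster point). The singleton hypothesis removes this subtlety and lets the three limits separate cleanly. The rest is bookkeeping: local Lipschitz continuity of $\bD{C}$ from Proposition~\ref{distance} ensures that the difference quotient is bounded, so the $\liminf$ is finite and the Dini subderivative is well-defined.
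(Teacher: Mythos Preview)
Your proof is correct and follows essentially the same approach as the paper's: both bound $\bD{C}(y+tw)-\bD{C}(y)$ from below by $D(x_t,y+tw)-D(x_t,y)$ for $x_t\in\bproj{C}(y+tw)$, expand, divide by $t$, and pass to the limit using Theorem~\ref{banffday}(iii) to get $x_t\to x$. The only cosmetic difference is that the paper first extracts a sequence $(t_n)$ realizing the $\liminf$ and then takes limits, whereas you keep the general $t$ and note the right-hand side has a genuine limit; both arrive at the same inequality.
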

\begin{proof}
Suppose that $\bproj{C}(y)=\{x\}$, and fix $w\in \RR^{J}$.
Let $(t_{n})$ be a positive sequence such that
$(y+t_nw)$ lies in $U$, $t_{n}\downarrow 0$, and
$$\deriv \bD{C}(y)(w)=\lim_{n\rightarrow \infty}\frac{\bD{C}(y+t_{n}w)-\bD{C}(y)}{t_{n}}.$$
Select $x_{n}\in \bproj{C}(y+t_{n}w)$, which is possible by Theorem~\ref{banffday}(i).
We have
\begin{multline}
\bD{C}(y+t_{n}w)-\bD{C}(y)\nonumber\\
\begin{aligned}
 =&\; D(x_{n},y+t_{n}w)-D(x_{n},y)+D(x_{n},y)-D(x,y)\\
\geq &\; D(x_{n},y+t_{n}w)-D(x_{n},y)\\
= &\; f(x_{n})-f(y+t_{n}w)-\langle\nabla f(y+t_{n}w),x_{n}-(y+t_{n}w)\rangle
-[f(x_{n})-f(y)-\langle \nabla f(y),x_{n}-y\rangle] \\
=&\; -(f(y+t_{n}w)-f(y))-\langle \nabla f(y+t_{n}w)-\nabla f(y),x_{n}-y\rangle +t_{n}
\langle\nabla f(y+t_{n}w),w\rangle .\\
\end{aligned}
\end{multline}
Dividing by $t_{n}$, we get
\begin{multline}\label{weneed}
\frac{\bD{C}(y+t_{n}w)-\bD{C}(y)}{t_{n}}\geq \\
-\frac{f(y+t_{n}w)-f(y)}{t_{n}}
-\frac{\langle \nabla f(y+t_{n}w)-\nabla f(y),x_{n}-y\rangle}{t_{n}}+\langle\nabla f(y+t_{n}w),w\rangle.
\end{multline}
By Theorem~\ref{banffday}(iii), $x_{n}\rightarrow x$.
Taking limits in (\ref{weneed}) yields
$$\deriv \bD{C}(y)(w)\geq -\langle \Hess f(y)w, x-y\rangle=\langle \Hess f(y)(y-x),w\rangle.$$
Since this holds for every $w\in \RR^J$, we conclude that $\Hess f(y)(y-x)\in \hat{\partial}\bD{C}(y)$.
\end{proof}

Lemma~\ref{subdiff2} allows us to generalize
\cite[Example~8.53]{Rock98} from the Euclidean distance to
the left Bregman distance. It delineates the differences between 
the Dini subdifferential, limiting subdifferential and Clarke subdifferential.

\begin{theorem}\label{complete}
Suppose that $f$ is twice continuously differentiable on $U$ and that
for every $u\in U$, $\Hess f(u)$ is positive definite.
Set $g=\bD{C}$, and let $y\in U$ and $w\in\RR^J$.
Then the following hold.
\begin{enumerate}
\item The Dini subderivative is
\begin{equation}\label{rightone}
\deriv g(y)(w)=\min_{x\in\bproj{C}(y)}\langle \Hess f(y)(y-x),w\rangle,
\end{equation}
so that the Dini subdifferential of $g$ is
\begin{equation}\label{theset}
\hat{\partial} g(y)=\begin{cases}
\{\Hess f(y)[y-\bproj{C}(y)]\} & \mbox{ if $\bproj{C}(y)$ is a singleton;}\\
\varnothing, & \mbox{ otherwise}.
\end{cases}
\end{equation}
The limiting subdifferential is
\begin{equation}\label{mordusubdiff}\partial_L g(y)=\Hess f(y)[y-\bproj{C}(y)].
\end{equation}
The Clarke subderivative is
\begin{equation}\label{clarkederiv}
\rockderiv g(y) (w)=\max_{x\in\bproj{C}(y)}\langle \Hess f(y)(y-x), w\rangle,
\end{equation}
from which we get the Clarke subdifferential
\begin{equation}\label{tom2}\overline{\partial}g(y)=\Hess f(y)[y-\conv\bproj{C}(y)].
\end{equation}
Hence  $-\bD{C}$ is Clarke regular on $U$.
\item If $y\in C$, then
$g$ is strictly differentiable with derivative $0$.
\end{enumerate}
\end{theorem}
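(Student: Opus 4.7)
The plan is to establish formula (\ref{rightone}) for the Dini subderivative first, since (\ref{theset})--(\ref{tom2}) will all follow from it combined with standard facts about locally Lipschitz functions. For the upper bound $\deriv g(y)(w) \leq \langle \Hess f(y)(y-x), w\rangle$ valid for each $x\in \bproj{C}(y)$, I would fix such an $x$ and use $g(y+tw) \leq D(x,y+tw)$ together with $g(y)=D(x,y)$ to bound $[g(y+tw)-g(y)]/t$ from above by $[D(x,y+tw)-D(x,y)]/t$; expanding $f$ and $\nabla f$ via their first-order Taylor approximations---which is where twice continuous differentiability enters---the right-hand side tends to $\langle \Hess f(y)(y-x),w\rangle$, and taking the minimum over $x$ yields the desired upper bound. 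For the lower bound, I would run the argument of Lemma~\ref{subdiff2} allowing $\bproj{C}(y)$ to be multi-valued: for any $t_n\downarrow 0$ choose $x_n\in\bproj{C}(y+t_nw)$; Theorem~\ref{banffday}(ii) ensures a cluster point $x^*\in\bproj{C}(y)$, and the estimate displayed in the proof of Lemma~\ref{subdiff2} gives $\liminf_n [g(y+t_nw)-g(y)]/t_n \geq \langle \Hess f(y)(y-x^*),w\rangle \geq \min_{x\in\bproj{C}(y)}\langle \Hess f(y)(y-x),w\rangle$.

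With (\ref{rightone}) in hand, (\ref{theset}) is obtained by unwinding the definition of $\hat{\partial} g(y)$: the condition $\langle y^*,w\rangle \leq \langle \Hess f(y)(y-x),w\rangle$ for every $w$ and every $x\in\bproj{C}(y)$ forces $y^*=\Hess f(y)(y-x)$ for all such $x$, and by positive definiteness of $\Hess f(y)$ this consistency requires $\bproj{C}(y)$ to be a singleton. For the limiting subdifferential (\ref{mordusubdiff}), the inclusion $\partial_L g(y)\subset \Hess f(y)[y-\bproj{C}(y)]$ follows by writing each element as a limit of singletons $\Hess f(y_n)(y_n-x_n)$ with $x_n\in\bproj{C}(y_n)$, and passing to a cluster point via Theorem~\ref{banffday}(ii). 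The reverse inclusion is the step I expect to be the main obstacle: given $x\in\bproj{C}(y)$, I would invoke Proposition~\ref{near}(ii) with the geodesic $z_\lambda=\nabla f^*(\lambda\nabla f(y)+(1-\lambda)\nabla f(x))$, which for $0\leq\lambda<1$ satisfies $\bproj{C}(z_\lambda)=\{x\}$; letting $\lambda\uparrow 1$ gives $z_\lambda\to y$ and $\Hess f(z_\lambda)(z_\lambda-x)\to \Hess f(y)(y-x)$, exhibiting the latter as a limit of singleton Dini subgradients.

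The Clarke formulas then fall out from $\bD{C}$ being locally Lipschitz on $U$ (Proposition~\ref{distance}): the identity $\overline{\partial} g(y)=\conv \partial_L g(y)$ together with the affine structure of $x\mapsto \Hess f(y)(y-x)$ gives (\ref{tom2}), and (\ref{clarkederiv}) follows from the support-function representation $\rockderiv g(y)(w)=\max_{y^*\in\overline{\partial} g(y)}\langle y^*,w\rangle$ combined with the fact that a linear functional attains its maximum over the convex hull of a compact set at an extreme point. Clarke regularity of $-\bD{C}$ reduces to matching $\deriv(-g)(y)(w)=-\lim_{t\downarrow 0}[g(y+tw)-g(y)]/t=\max_{x\in\bproj{C}(y)}\langle \Hess f(y)(x-y),w\rangle$ against the support function of $\overline{\partial}(-g)(y)=-\overline{\partial} g(y)$, which equals the same maximum. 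Finally, for part (ii), if $y\in C$ then $D(y,y)=0$ combined with essential strict convexity of $f$ forces $\bproj{C}(y)=\{y\}$, so (\ref{tom2}) collapses to $\overline{\partial} g(y)=\{0\}$; since $g$ is locally Lipschitz, the standard criterion that a singleton Clarke subdifferential implies strict differentiability with that singleton as the derivative completes the proof, giving $\nabla g(y)=0$.
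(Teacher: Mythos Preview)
Your proposal is correct and follows essentially the same route as the paper's proof: the two--sided estimate for $\deriv g(y)(w)$ via the comparison $g(y+tw)\le D(x,y+tw)$ and a minimizing sequence $x_n\in\bproj{C}(y+t_nw)$, the geodesic $z_\lambda$ from Proposition~\ref{near}(ii) to manufacture nearby points with singleton projection for the reverse inclusion in $\partial_L g(y)$, and the passage to the Clarke objects via local Lipschitzness from Proposition~\ref{distance}. The only notable deviation is your treatment of Clarke regularity of $-g$: the paper simply combines (\ref{tom1}) from Proposition~\ref{subdiff1} with (\ref{tom2}) to force $\hat\partial(-g)(y)=\overline{\partial}(-g)(y)$, whereas you work at the level of directional derivatives and invoke the existence of the one--sided limit $\lim_{t\downarrow 0}[g(y+tw)-g(y)]/t$. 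That limit does exist---your upper estimate actually bounds the $\limsup$ as well, so both $\liminf$ and $\limsup$ are pinched to $\min_{x\in\bproj{C}(y)}\langle\Hess f(y)(y-x),w\rangle$---but you should say so explicitly, since writing ``$-\lim$'' without comment hides this small step.
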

\begin{proof} By Theorem~\ref{banffday}(i), $\bproj{C}(y)\neq\varnothing$.
Fix $x\in \bproj{C}(y)$ and $t>0$ sufficiently small so that $y+tw\in U$.
In view of $\bD{C}(y+tw)\leq D(x,y+tw)$ and
$\bD{C}(y)=D(x,y)$, we have
\begin{multline}
\deriv g(y)(w)= \liminf_{t\downarrow 0}\frac{\bD{C}(y+tw)-\bD{C}(y)}{t}\leq \liminf_{t\downarrow 0}\frac{D(x,y+tw)-D(x,y)}{t}\nonumber\\
\begin{aligned}
= &\;\liminf_{t\downarrow 0}\frac{ f(x)-f(y+tw)-\langle \nabla f(y+tw),x-(y+tw)\rangle-[f(x)-f(y)-\langle\nabla f(y), x-y\rangle]}{t} \\
= &\; \liminf_{t\downarrow 0}\frac{-[f(y+tw)-f(y)]-\langle \nabla f(y+tw)-\nabla f(y), x-y\rangle+t\langle\nabla f(y+tw), w\rangle}{t}\\
= &\; \liminf_{t\downarrow 0}-\frac{f(y+tw)-f(y)}{t}-\frac{\langle \nabla f(y+tw)-\nabla f(y), x-y\rangle}{t}+\langle\nabla f(y+tw), w\rangle\\
=&\; -\langle\nabla f(y), w\rangle-\langle \Hess f(y)w,x-y\rangle+\langle\nabla f(y),w\rangle\\
= &\; \langle \Hess f(y)(y-x),w\rangle.
\end{aligned}
\end{multline}
Since this holds for every $x\in\bproj{C}(y)$, it follows from Theorem~\ref{banffday}(i) that
$$\deriv g (y)(w)\leq \min_{x\in\bproj{C}(y)}\langle \Hess f(y)(y-x),w\rangle.$$

To get the opposite inequality, we consider a positive sequence $(t_n)$
such that $t_{n}\downarrow 0$, $(y+t_nw)$ lies in $U$, and
$$\deriv g(y)(w)=\lim_{n\rightarrow\infty} \frac{\bD{C}(y+t_{n}w)-\bD{C}(y)}{t_{n}}.$$
Select $x_{n}\in\bproj{C}(y+t_{n}w)$, which is possible by
Theorem~\ref{banffday}(i). Then
\begin{align}
\bD{C}(y+t_{n}w) &=D(x_{n},y+t_{n}w)\nonumber\\
& =f(x_{n})-f(y+t_{n}w)-\langle\nabla f(y+t_{n}w),x_{n}-(y+t_{n}w)\rangle
\end{align}
and
\begin{equation}
\bD{C}(y)\leq D(x_{n},y) \leq f(x_{n})-f(y)-\langle \nabla f(y),x_{n}-y\rangle.
\end{equation}
By Theorem~\ref{banffday}(ii), and after taking a subsequence if necessary,
we assume that $x_{n}\rightarrow x\in \bproj{C}(y)$. We estimate
\begin{multline}
\frac{\bD{C}(y+t_{n}w)-\bD{C}(y)}{t_{n}}\\
\begin{aligned}
\geq &\; \frac{-[f(y+t_{n}w)-f(y)]-\langle \nabla f(y+t_{n}w)-\nabla f(y),x_{n}-y\rangle+\langle\nabla
f(y+t_{n}w),t_{n}w\rangle}{t_{n}}\\
=& \frac{-[f(y+t_{n}w)-f(y)]}{t_{n}}-\frac{\langle \nabla f(y+t_{n}w)-\nabla f(y),x_{n}-y\rangle}{t_{n}}+\langle\nabla
f(y+t_{n}w),w\rangle.
\end{aligned}
\end{multline}
Taking limits, we obtain
$$\deriv g(y)(w)\geq -\langle \Hess f(y)w,x-y\rangle =\langle \Hess f(y)(y-x),w\rangle\geq
\min_{x\in\bproj{C}(y)}\langle \Hess f(y)(y-x),w\rangle.$$
Therefore, (\ref{rightone}) is correct.

For $y^*\in\RR^J$, $y^*\in \hat{\partial}g(y)$ if and only if
$$\langle y^*,w\rangle\leq \langle \Hess f(y)(y-x),w\rangle \quad \forall
x\in\bproj{C}(y),w\in \RR^J.$$
This holds if and only if $y^*=\Hess f(y)(y-x)$, $\forall x\in\bproj{C}(y)$;
since $\Hess f(y)$ is invertible, we deduce that $x=y-(\Hess f(y))^{-1}y^*$,
so that $\bproj{C}(y)$ is unique. Therefore, if $\bproj{C}(y)$ is not unique,
then $\hat{\partial} g(y)$ has to be empty. Hence (\ref{theset}) holds.

For every $z\in \RR^J$, we have
$$\hat{\partial} g(z)\subset \Hess f(z)(z-\bproj{C}(z)).$$
The upper semicontinuity of $\bproj{C}$ (see Theorem~\ref{banffday}(ii))
implies through
$\partial_L g(y)=\limsup_{z\rightarrow y}\hat{\partial} g(z)$
that
\begin{equation}\label{hockey1}\partial_L g(y)\subset \Hess f(y)(y-\bproj{C}(y)).
\end{equation}
Equality actually has to hold. Indeed, for
$x\in\bproj{C}(y)$ and $0\leq \lambda< 1$,
the point
$$z_{\lambda}:=\nabla f^*(\lambda\nabla f(y)+(1-\lambda)\nabla f(x)),$$
has $\bproj{C}(z_{\lambda})=\{x\}$ by Proposition~\ref{near}(ii).
Lemma~\ref{subdiff2} shows that
$$\Hess f(z_{\lambda})(z_{\lambda}-x)\in \hat{\partial} g(z_{\lambda}),$$
where $\Hess f(z_\lambda)(z_\lambda-x)\to \Hess f(y)(y-x)$
as $\lambda \to 1$, since $\Hess f$ is continuous. Thus
$\Hess f(y)(y-x)\in \partial_L g(y)$ and therefore
\begin{equation}\label{hockey2}
\Hess f(y)(y-\bproj{C}(y))\subset \partial_L g(y).
\end{equation}
Hence (\ref{hockey1}) and (\ref{hockey2}) together
give (\ref{mordusubdiff}).

Since $g$ is locally Lipschitz around $y\in U$
by Proposition~\ref{distance}, the singular subdifferential of $g$ at $y$ is $0$, so that its polar cone
is $\RR^J$. Then for every $w\in \RR^J$, using  \cite[Exercise~8.23]{Rock98} we have
$$\rockderiv g(y)(w)=\sup\{\langle y^*,w\rangle:\; y^*\in\partial_L g(y)\};$$
thus, (\ref{clarkederiv}) follows from (\ref{mordusubdiff}). Now (\ref{clarkederiv}) is the same as
$$\rockderiv g(y) (w)=\max\langle \Hess f(y)(y-\conv\bproj{C}(y)), w\rangle.$$
As $\conv\bproj{C}(y)$ is compact, we obtain (\ref{tom2}). Or directly apply \cite[Theorem 8.49]{Rock98}
and (\ref{mordusubdiff}).

The Clarke regularity of $-\bD{C}$ follows from combining (\ref{tom1})
and (\ref{tom2}). Indeed,
$$\Hess f(y)[\conv \bproj{C}(y)-y]\subset
\hat{\partial}(-\bD{C})(y)\subset\overline{\partial}
(-\bD{C})(y)=\Hess f(y)[\conv\bproj{C}(y)-y],$$ so that $\hat{\partial}
(-\bD{C})(y)=\overline{\partial}(-\bD{C})(y).$

(ii): When $y\in C$, $\bproj{C}(y)=\{y\}$. By (\ref{mordusubdiff}),
$\partial_L g(y)=\{0\}$, and this implies that $g$ is strictly
differentiable at $y$ by \cite[Theorem~9.18(b)]{Rock98}.
\end{proof}

\begin{corollary}\label{singletonchar}
Suppose that $f$ is twice continuously differentiable and that
$\Hess f(y)$ is positive definite, for every $y\in U$.
Then for $y\in U$, the following are equivalent:
\begin{enumerate}
\item $\bD{C}$ is Dini subdifferentiable at $y$; 
\item $\bD{C}$ is differentiable at $y$; 
\item $\bD{C}$ is strictly differentiable at $y$;
\item $\bD{C}$ is Clarke regular at $y$;
\item $\bproj{C}(y)$ is a singleton. 
\end{enumerate}
If these hold, we have $\nabla \bD{C}(y)=\Hess f(y)[y-\bproj{C}(y)]$.
\end{corollary}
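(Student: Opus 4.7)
The plan is to reduce everything to the explicit formulas supplied by Theorem~\ref{complete}: namely (\ref{theset}) for $\hat\partial g(y)$, (\ref{mordusubdiff}) for $\partial_L g(y)$, and (\ref{tom2}) for $\overline\partial g(y)$, where $g=\bD{C}$. With these in hand, the corollary becomes a bookkeeping exercise organized around the cycle (v)$\Rightarrow$(iii)$\Rightarrow$(ii)$\Rightarrow$(i)$\Rightarrow$(v), together with the separate equivalence (iv)$\Leftrightarrow$(v).

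The implications (iii)$\Rightarrow$(ii)$\Rightarrow$(i) are immediate from the definitions: strict differentiability implies differentiability, and differentiability forces $\hat\partial g(y)=\{\nabla g(y)\}\neq\varnothing$. For the closing step (i)$\Rightarrow$(v), I would argue by contrapositive: if $\bproj{C}(y)$ is not a singleton, then the second branch of (\ref{theset}) gives $\hat\partial g(y)=\varnothing$, so $g$ cannot be Dini subdifferentiable at $y$. The key step (v)$\Rightarrow$(iii) uses the formula (\ref{mordusubdiff}): assuming $\bproj{C}(y)$ is a singleton, the limiting subdifferential reduces to a single vector $\Hess f(y)[y-\bproj{C}(y)]$, and because $g$ is locally Lipschitz around $y$ by Proposition~\ref{distance}, \cite[Theorem~9.18]{Rock98} upgrades single-valuedness of $\partial_L g(y)$ to strict differentiability at $y$. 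This last invocation is the only place where something beyond plain formula-chasing enters, and is the step I would flag as the main (though still standard) technical point.

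For (iv)$\Leftrightarrow$(v), recall that Clarke regularity of $g$ at $y$ means $\hat\partial g(y)=\overline\partial g(y)$. When $\bproj{C}(y)$ is a singleton, (\ref{theset}) and (\ref{tom2}) both collapse to the same singleton $\{\Hess f(y)[y-\bproj{C}(y)]\}$, giving regularity. Conversely, if $\bproj{C}(y)$ is not a singleton, (\ref{theset}) forces $\hat\partial g(y)=\varnothing$, while $\overline\partial g(y)=\Hess f(y)[y-\conv\bproj{C}(y)]$ is nonempty (since $\bproj{C}(y)\neq\varnothing$ by Theorem~\ref{banffday}(i) and $\Hess f(y)$ is invertible), so the two cannot coincide. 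Finally, the displayed formula $\nabla\bD{C}(y)=\Hess f(y)[y-\bproj{C}(y)]$ is simply the value of the (now single-valued) subdifferential recorded in (\ref{theset}).
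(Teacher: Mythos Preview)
Your proof is correct and organized differently from the paper's. The paper links (ii), (iii), and (iv) by invoking an external result on distance functions, \cite[Theorem~3.4]{WuYe}; it proves (i)$\Rightarrow$(ii) by combining the assumed Dini subdifferentiability of $\bD{C}$ with that of $-\bD{C}$ supplied by Proposition~\ref{subdiff1} (so that both $g$ and $-g$ are Dini subdifferentiable, forcing differentiability via \cite[Exercise~3.4.14]{Yuri}); and it obtains (v)$\Rightarrow$(ii) from (\ref{tom2}) together with \cite[Proposition~2.2.4]{Clarke}. Your cycle (v)$\Rightarrow$(iii)$\Rightarrow$(ii)$\Rightarrow$(i)$\Rightarrow$(v) together with the separate (iv)$\Leftrightarrow$(v) stays almost entirely within the subdifferential formulas (\ref{theset}), (\ref{mordusubdiff}), (\ref{tom2}) of Theorem~\ref{complete}, needing only \cite[Theorem~9.18]{Rock98} (already used in Theorem~\ref{complete}(ii)) for the step (v)$\Rightarrow$(iii). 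This makes your argument more self-contained and avoids the Wu--Ye reference; the paper's route, on the other hand, foregrounds the connection to general regularity theory for distance-type functions.
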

\begin{proof}
(i)$\Rightarrow$(ii): By Proposition~\ref{subdiff1},
both $-\bD{C}$ and $\bD{C}$ are Dini subdifferentiable.
Thus $\bD{C}$ is differentiable at $y$ (see \cite[Exercise~3.4.14 on
page~143]{Yuri}), and
$$\hat{\partial}\bD{C}(y)=-\hat{\partial} (-\bD{C})(y)=\{\nabla \bD{C}(y)\}.$$
(ii)$\Rightarrow$(i) is clear.
(ii)$\Leftrightarrow$(iii)$\Leftrightarrow$(iv): 
This is a consequence of \cite[Theorem~3.4]{WuYe}.
(ii)$\Leftrightarrow$(v):
If $\bD{C}$ is differentiable at $y$, then (\ref{theset}) implies that
$\bproj{C}(y)$ is a singleton. Conversely, if $\bproj{C}(y)$ is a singleton,
then (\ref{tom2}), Proposition~\ref{distance}, and
\cite[Proposition~2.2.4]{Clarke} show that
$\bD{C}$ is strictly differentiable
and hence differentiable at $y$.
Finally, the gradient formula
$\nabla \bD{C}(y)=\Hess f(y)[y-\bproj{C}(y)]$ is a consequence of Proposition~\ref{subdiff1} or Lemma~\ref{subdiff2}.
\end{proof}

\begin{corollary}
Suppose that $f$ is twice continuously differentiable on $U$ and that
$\Hess f(y)$ is positive definite for every
$y\in U$. Then $\bproj{C}$ is almost everywhere and generically single-valued on $U$.
\end{corollary}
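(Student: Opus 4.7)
The plan is to combine Proposition~\ref{distance} with the classical fact that finite convex functions on $\RR^J$ are differentiable both Lebesgue-a.e.\ and on a Baire-generic (dense $G_\delta$) set, then transport this through the gradient map and cash in via Corollary~\ref{singletonchar}. The equivalence (ii)$\Leftrightarrow$(v) of that corollary reduces the problem to exhibiting a subset of $U$ that is simultaneously of full Lebesgue measure and a dense $G_\delta$, on which $\bD{C}$ is differentiable.

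First I would use Proposition~\ref{distance} to write $\bD{C} = [f^* - (f+\iota_C)^*]\circ \nabla f$. Assumption A2 gives $\dom f^* = \RR^J$, and since $(f+\iota_C)^* \leq f^*$ is also finite on $\RR^J$, both conjugates are real-valued convex functions on all of $\RR^J$. Invoking the classical result that such functions are differentiable simultaneously a.e.\ and on a dense $G_\delta$, and intersecting the two resulting sets, yields a set $D\subset\RR^J$ with both properties on which $f^* - (f+\iota_C)^*$ is differentiable. By Fact~\ref{isom}, $\nabla f\colon U \to \RR^J$ is a homeomorphism with continuous inverse $\nabla f^*$; positive definiteness of $\Hess f$ on $U$ together with the inverse function theorem upgrades this to a $C^1$-diffeomorphism. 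Hence $V := \nabla f^*(D) \subset U$ is again a dense $G_\delta$ (as the homeomorphic image of one) and of full Lebesgue measure in $U$ (change of variables applied to the null complement $\RR^J\setminus D$, using the nonvanishing Jacobian $\Hess f^*$, produces a null set). The chain rule gives differentiability of $\bD{C}$ on $V$, and Corollary~\ref{singletonchar} asserts that $\bproj{C}$ is single-valued at every point of $V$.

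The only non-routine input here is the simultaneous a.e.\ and Baire-generic differentiability of a finite convex function on $\RR^J$: the a.e.\ half is Rademacher's theorem applied to a locally Lipschitz convex function, while the Baire-generic half follows because the set of differentiability points coincides with the $G_\delta$ on which the subdifferential is single-valued, with complement contained in a countable union of lower-dimensional sets (see, e.g., \cite[\S 25]{Rock70}). Once this classical input is cited, the diffeomorphism transfer through $\nabla f^*$ and the invocation of Corollary~\ref{singletonchar} are both routine.
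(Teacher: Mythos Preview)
Your argument is correct but follows a genuinely different route from the paper's proof. The paper works directly with $\bD{C}$: Proposition~\ref{distance} gives local Lipschitzness on $U$, so Rademacher yields a.e.\ differentiability; for the generic part, the paper invokes the Clarke regularity of $-\bD{C}$ established in Theorem~\ref{complete} and then cites \cite{loewen} to conclude generic differentiability. Corollary~\ref{singletonchar} then converts differentiability into single-valuedness of $\bproj{C}$, just as you do.

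Your approach instead exploits the decomposition $\bD{C} = [f^* - (f+\iota_C)^*]\circ\nabla f$ and pushes the question back to the convex function $(f+\iota_C)^*$, where both a.e.\ and Baire-generic differentiability are classical (e.g., \cite[Theorem~25.5]{Rock70}), then transports through the $C^1$-diffeomorphism $\nabla f$. This buys you independence from the Clarke-regularity statement in Theorem~\ref{complete} and from the external reference \cite{loewen}; the generic part in particular becomes a direct consequence of convexity rather than of the regularity theory developed in Section~\ref{clarkemor}. The price is the diffeomorphism transfer, which---as you note---is routine once the positive-definiteness of $\Hess f$ is used to invoke the inverse function theorem (so that $\nabla f^*$ is $C^1$, hence locally Lipschitz, and maps null sets to null sets). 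One small simplification available to you: since $f^*$ is Legendre with full domain, it is everywhere differentiable, so the only conjugate whose differentiability set actually matters is $(f+\iota_C)^*$.
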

\begin{proof}
By Proposition~\ref{distance}, $\bD{C}$ is locally Lipschitz on
$U$. Apply Rademacher's Theorem
\cite[Theorem~9.1.2]{lewis} or
\cite[Corollary~3.4.19]{Yuri}
to obtain that $\bD{C}$ is
differentiable almost everywhere on $U$.
Moreover, since $-\bD{C}$ is Clarke regular
on $U$ by Theorem~\ref{complete}, 
we use \cite[Theorem~10]{loewen} to conclude that
$-\bD{C}$ is differentiable generically on $U$, and so is $\bD{C}$.
Hence the result follows
from Corollary~\ref{singletonchar}.
\end{proof}

\section{Characterizations of Chebyshev Sets}
\label{completecheby}

\begin{definition}\label{fenchelsubdifferential}
For $g:\RR^J\rightarrow\RX$ (not necessarily convex), we let
$$\partial g(x):=\{s\in \RR^J:\; g(y)\geq g(x)+\langle s,y-x\rangle\ \forall y\in\RR^J\}\quad \mbox{if $x\in \dom g$}; $$
and $\partial g(x) = \varnothing$ otherwise;
and the Fenchel conjugate of $g$ is defined by
$$s\mapsto g^*(s):=\sup\{\langle s,x\rangle -g(x):\; x\in \RR^J\}.$$
\end{definition}
According to \cite[Proposition~1.4.3]{urruty1},
\begin{equation}\label{onewayonly1}
s\in\partial g(x)\quad \Rightarrow \quad x\in\partial g^*(s),
\end{equation}
which becomes ``$\Leftrightarrow$'' if $g\in\Gamma$.
In order to study \bDC\ sets, we need two preparatory results concerning subdifferentiabilities
of $f+\iota_{C}$ and $(f+\iota_{C})^*$. Lemmas~\ref{friday}, ~\ref{conjugatesub} and Theorem~\ref{mainpart}
below generalize respectively, and are inspired by
\cite[Propositions~3.2.1, 3.2.2 and Theorem~3.2.3]{urruty1}.

\begin{lemma}\label{friday}
Let $x\in \RR^J$. Then
$$ \partial (f+\iota_{C})(x)=\{s\in\RR^J\colon x\in\bproj{C}(\nabla f^*(s))\}=(\bproj{C}\circ\nabla f^*)^{-1}(x),$$
and consequently $\partial (f+\iota_C) = \big(\bproj{C}\circ\nabla
f^*\big)^{-1}$.
\end{lemma}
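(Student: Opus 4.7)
The plan is to unpack the Fenchel subdifferential inequality defining $\partial(f+\iota_C)(x)$ and recognize it, via the three-point identity \eqref{3point} and Fact~\ref{isom}, as precisely the condition characterizing left Bregman projections.

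First I would dispose of the trivial case $x\notin C$: then $(f+\iota_C)(x)=+\infty$, so $\partial(f+\iota_C)(x)=\varnothing$ by Definition~\ref{fenchelsubdifferential}; on the other side, $\bproj{C}(\nabla f^*(s))\subset C$ for every $s$, so no $s$ can satisfy $x\in\bproj{C}(\nabla f^*(s))$ either. Hence both sets are empty and the identity holds vacuously.

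Now assume $x\in C$. By Definition~\ref{fenchelsubdifferential}, $s\in\partial(f+\iota_C)(x)$ is equivalent to
\begin{equation*}
f(c)\geq f(x)+\langle s,c-x\rangle \quad \forall c\in C,
\end{equation*}
where the values at $y\notin C$ are automatically fine since $(f+\iota_C)(y)=+\infty$. By Assumption~A2 we have $\dom f^*=\RR^J$, so $y:=\nabla f^*(s)\in U$ is defined, and by Fact~\ref{isom} the relation $s=\nabla f(y)$ holds. Substituting $s=\nabla f(y)$ into the subgradient inequality and rearranging with the three-point identity \eqref{3point} yields
\begin{equation*}
D(c,y)-D(x,y)=f(c)-f(x)-\langle \nabla f(y),c-x\rangle\geq 0 \quad \forall c\in C,
\end{equation*}
which is exactly the statement $x\in\bproj{C}(y)=\bproj{C}(\nabla f^*(s))$. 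Reversing the chain of equivalences (each step is an iff) gives the first equality. The second equality is just rewriting the set-valued inverse: $s\in (\bproj{C}\circ\nabla f^*)^{-1}(x)$ means precisely $x\in\bproj{C}(\nabla f^*(s))$.

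There is no real obstacle here; the proof is almost a one-line translation once one pairs \eqref{3point} with the topological isomorphism $\nabla f^*\colon\RR^J\to U$ provided by Fact~\ref{isom}. The only point that requires a moment's care is ensuring that $\nabla f^*(s)$ is well-defined for \emph{every} $s\in\RR^J$, which is exactly where Assumption~A2 ($\dom f^*=\RR^J$) is used.
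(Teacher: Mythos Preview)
Your proof is correct. Both you and the paper reduce the subdifferential condition to the Bregman-projection condition $D(c,\nabla f^*(s))\geq D(x,\nabla f^*(s))$ for all $c\in C$, but you get there differently: the paper invokes the Fenchel--Young equality characterization $s\in\partial(f+\iota_C)(x)\Leftrightarrow (f+\iota_C)^*(s)+(f+\iota_C)(x)=\langle s,x\rangle$ together with the conjugate formula $(f+\iota_C)^*=f^*-\bD{C}\circ\nabla f^*$ from Proposition~\ref{distance}, while you bypass the conjugate entirely and work directly with the supporting-inequality definition of $\partial$ and the three-point identity~\eqref{3point}. Your route is slightly more elementary and self-contained (it does not appeal to Proposition~\ref{distance}, whose statement carries a $C^2$ hypothesis that is irrelevant here); the paper's route, on the other hand, makes explicit the link between $\partial(f+\iota_C)$ and the formula for $(f+\iota_C)^*$ that is reused later in Lemma~\ref{conjugatesub} and Remark~\ref{r:071210}.
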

\begin{proof} The statement is clear if $x\notin C$, so assume $x\in C$.
By \cite[Theorem~1.4.1]{urruty1},
\begin{equation}\label{gonewithwind}
s\in \partial (f+\iota_{C})(x) \quad \Leftrightarrow \quad
(f+\iota_{C})^*(s)+(f+\iota_{C})(x)=\langle s,x\rangle.
\end{equation}
Proposition~\ref{distance} shows that
$$(f+\iota_{C})^*=f^*-\bD{C}\circ\nabla f^* \quad \mbox{ on $\RR^J$}.$$
Combining with (\ref{gonewithwind}) and since $x\in C$, we get
$$s\in \partial (f+\iota_{C})(x)\quad \Leftrightarrow\quad f^*(s)-(\bD{C}\circ\nabla f^*)(s)+f(x)=\langle s,x\rangle;$$
equivalently,
\begin{align}
\bD{C}(\nabla f^*(s))& =f(x)+f^*(s)-\langle s,x\rangle\\
& =f(x)+f^*((\nabla f\circ \nabla f^*)(s))-\langle\nabla f\circ \nabla f^*(s),x\rangle\\
&= f(x)-f(\nabla f^*(s))-\langle\nabla f(\nabla f^*(s)), x-\nabla f^*(s)\rangle\\
&= D(x,\nabla f^*(s)),
\end{align}
i.e., $x\in\bproj{C}(\nabla f^*(s)).$
\end{proof}

The following result,
which establishes the link between $\partial (f+\iota_{C})^*$ and
$\bproj{C}\circ\nabla f^*$,
is the cornerstone for the convexity characterization of \bDC\ sets.
\begin{lemma} \label{conjugatesub}
Let $s\in\RR^J$. Then
$$\partial (f+\iota_{C})^*(s)=\conv [\bproj{C}(\nabla f^*(s))]=\conv [\bproj{C}\circ\nabla f^*(s)]
.$$
Consequently, $\bproj{C}\circ\nabla f^*$ is monotone on $\RR^{J}$.
\end{lemma}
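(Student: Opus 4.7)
My plan is to prove the set equality by two inclusions and then to read off monotonicity from standard convex analysis. Set $h := f + \iota_{C}$; it is proper and lower semicontinuous, and since $f$ is $1$-coercive and $h = +\infty$ off $C$, the function $h$ inherits $1$-coercivity from $f$. This property will be crucial in coping with the possible non-convexity of $h$. For the easy inclusion $\conv[\bproj{C}(\nabla f^*(s))] \subset \partial h^*(s)$, I would first use Lemma~\ref{friday} to rewrite ``$x \in \bproj{C}(\nabla f^*(s))$'' as ``$s \in \partial h(x)$'', and then invoke the implication \eqref{onewayonly1} to obtain $x \in \partial h^*(s)$. Since $h^*$ is convex, $\partial h^*(s)$ is convex, so taking convex hulls completes this direction.

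The reverse inclusion is the main content. Fix $x_0 \in \partial h^*(s)$, equivalently $h^*(s) + h^{**}(x_0) = \scal{s}{x_0}$. I would exploit the $1$-coercivity of $h$ to represent its biconjugate as an attained finite convex combination: by Carathéodory together with the compactness argument available for $1$-coercive proper lower semicontinuous functions (cf.~\cite[Proposition~3.2.2]{urruty1}), there exist $\lambda_1, \ldots, \lambda_{J+1} \geq 0$ with $\sum_{i=1}^{J+1} \lambda_i = 1$ and points $x_1, \ldots, x_{J+1} \in \RR^J$ satisfying $x_0 = \sum_{i=1}^{J+1} \lambda_i x_i$ and $h^{**}(x_0) = \sum_{i=1}^{J+1} \lambda_i h(x_i)$. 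Substituting these into the Fenchel equality yields
$$\sum_{i=1}^{J+1} \lambda_i \bigl( h(x_i) + h^*(s) - \scal{s}{x_i} \bigr) = 0,$$
with each summand nonnegative by the Fenchel--Young inequality. Hence every index $i$ with $\lambda_i > 0$ satisfies $h(x_i) + h^*(s) = \scal{s}{x_i}$, which is equivalent to $s \in \partial h(x_i)$ in the sense of Definition~\ref{fenchelsubdifferential}. A final application of Lemma~\ref{friday} places each such $x_i$ in $\bproj{C}(\nabla f^*(s))$, so $x_0 \in \conv[\bproj{C}(\nabla f^*(s))]$.

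The asserted monotonicity of $\bproj{C}\circ\nabla f^*$ is then immediate: this set-valued map is a selection of the monotone operator $\partial h^*$, hence is itself monotone. The main obstacle to executing the plan is justifying the Carathéodory/$1$-coercivity representation of $h^{**}$ with the infimum attained; once that is in hand, the remainder reduces to bookkeeping with the Fenchel--Young inequality and Lemma~\ref{friday}.
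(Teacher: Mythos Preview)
Your proposal is correct and follows essentially the same route as the paper: both arguments hinge on the $1$-coercivity of $h=f+\iota_C$ to obtain an \emph{attained} Carath\'eodory representation of the biconjugate/convex hull, then invoke Lemma~\ref{friday} to identify the contributing points with $\bproj{C}(\nabla f^*(s))$, and finally read off monotonicity as a selection of $\partial h^*$. The only cosmetic difference is that the paper packages the hard inclusion by citing the subdifferential-of-convex-hull characterization \cite[Theorem~1.5.6]{urruty1} (after noting $\conv h$ is lsc via \cite[Proposition~1.5.4]{urruty1}), whereas you unpack that same fact by hand through the Fenchel--Young equality.
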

\begin{proof}
Since $f$ is $1$-coercive and $C$ is closed, the function
$f+\iota_{C}$ is $1$-coercive and lower semicontinuous.
We have that $\conv (f+\iota_{C})$ is lower semicontinuous
by \cite[Proposition~1.5.4]{urruty1}, and
$\dom (f+\iota_{C})^{*}=\RR^J$ by \cite[Proposition~1.3.8]{urruty1}.
Now
$$x\in \partial (f+\iota_{C})^*(s) \quad \Leftrightarrow\quad x\in \partial [\conv (f+\iota_{C})]^*(s)
\quad \Leftrightarrow\quad s\in \partial [\conv(f+\iota_{C})](x),$$
in which the first equivalences follows from \cite[Corollary~1.3.6]{urruty1} and the second equivalence uses the lower semicontinuity of $\conv (f+\iota_{C})$.
Using \cite[Theorem~1.5.6]{urruty1},
$s\in \partial [\conv(f+\iota_{C})](x)$ if and only if there exist $x_{1},\ldots, x_{k}\in \RR^J$,
$\alpha_{1},\ldots,\alpha_{k}>0$ such that
\begin{equation}\label{togo}
\sum_{j=1}^{k}\alpha_j=1, \quad
x=\sum_{j=1}^{k}\alpha_{j}x_{j},\quad \mbox{ and }\quad s\in \bigcap_{j=1}^{k}\partial (f+\iota_{C})(x_{j}).
\end{equation}
But $s\in \partial (f+\iota_{C})(x_{j})$ is equivalent to $$x_{j}\in \bproj{C}(\nabla f^*(s)),$$
by Lemma~\ref{friday}.
Hence (\ref{togo}) gives $\partial (f+\iota_{C})^*(s)=\conv \bproj{C}(\nabla f^*(s)).$
Finally, as a selection of $\partial(f+\iota_C)^*$, which is maximal
monotone, the operator $\bproj{C}\circ\nabla f^*$ is monotone.
\end{proof}

\begin{remark} \label{r:071210}
Let $y\in \RR^J=\dom f^*$. Then
$(f+\iota_{C})^*(y)=f^*(y)-\inf_{x\in C}[f(x)+f^*(y)-\scal{y}{x}].$
Since
$$f(x)+f^*(y)-\langle y,x\rangle=f(x)+f^*(\nabla f(\nabla f^*(y)))-\langle \nabla f(\nabla f^*(y)),x\rangle =
D(x,\nabla f^*(y)),$$
we have
$(f+\iota_{C})^*(y)=f^*(y)-\bD{C}(\nabla f^*(y)).$ Hence
$$(f+\iota_{C})^*=f^*-\bD{C}\circ\nabla f^*;$$ see also
Proposition~\ref{distance}.  If $f=\tfrac{1}{2}\|\cdot\|^2$, then
$$(f+\iota_{C})^*=\frac{1}{2}\|\cdot\|^2-\frac{1}{2}d_{C}^2$$
is the so-called \emph{Asplund function}, where
$d_{C}(y):=\inf\{\|y-x\|:\; x\in C\},$ $\forall y\in \RR^J$.
In this case, Lemma~\ref{conjugatesub} is classical; see
\cite[pages~262--264]{urruty3} or \cite{urruty4}.
\end{remark}

We also need the following result from \cite{solo}.

\begin{proposition}[Soloviov] \label{vlad}
Let $g:\RR^J\rightarrow\RX$ be lower semicontinuous,
and assume that $g^*$ is essentially smooth.
Then $g$ is convex.
\end{proposition}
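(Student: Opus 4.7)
\textbf{Plan of proof for Proposition~\ref{vlad}.}

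The strategy is to show $g = g^{**}$; since $g^{**}$ is convex and lower semicontinuous, this immediately yields convexity of $g$. Essential smoothness of $g^*$ forces $\inte \dom g^* \neq \emptyset$, so in particular $g^*$ and $g$ are proper, and $g^{**}$ is a proper convex lower semicontinuous minorant of $g$ with $g \geq g^{**}$ on $\RR^J$.

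The central step will be to upgrade the openness of $\dom g^*$ around each $y \in \inte \dom g^*$ into a super-linear minorization of $g$. First I would fix $\epsilon > 0$ with $y + \epsilon \ball \subseteq \dom g^*$; the convex function $g^*$ is then finite and continuous on a neighborhood of this compact ball, hence bounded above by some constant $M$ there. Writing the Fenchel-Young inequality at each $y' = y + \epsilon u$ with $\|u\|\leq 1$ and specialising to $u = z/\|z\|$ gives
\begin{equation*}
g(z) \;\geq\; \scal{y}{z} + \epsilon\|z\| - M \qquad(\forall\, z\in\RR^J),
\end{equation*}
so $z\mapsto g(z)-\scal{y}{z}$ is lower semicontinuous and coercive, hence attains its infimum $-g^*(y)$ at some $x^*$. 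The resulting Young-Fenchel equality $g(x^*)+g^*(y)=\scal{x^*}{y}$ means $y\in\partial g(x^*)$ in the sense of Definition~\ref{fenchelsubdifferential}, so by \eqref{onewayonly1} $x^* \in \partial g^*(y)$. Essential smoothness then pins down $x^* = \nabla g^*(y)$, and comparing with the identity $g^{**}(\nabla g^*(y))+g^*(y)=\scal{\nabla g^*(y)}{y}$ delivers the crucial equality $g(\nabla g^*(y)) = g^{**}(\nabla g^*(y))$.

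Hence $g = g^{**}$ on $\nabla g^*(\inte \dom g^*)$, which by the Fenchel-duality identity $\dom\partial g^{**} = \ran \partial g^* = \nabla g^*(\inte \dom g^*)$ contains $\inte\dom g^{**}$. To extend the equality to every $x \in \dom g^{**}$, I would choose $x_0 \in \inte\dom g^{**}$ and form $x_n = (1-\tfrac{1}{n}) x + \tfrac{1}{n} x_0 \in \inte\dom g^{**}$; continuity of the convex function $g^{**}$ along the segment $[x_0,x]$ combined with lower semicontinuity of $g$ gives $g(x)\leq \lim g(x_n) = \lim g^{**}(x_n) = g^{**}(x)$. Outside $\dom g^{**}$ both functions equal $+\infty$, so $g = g^{**}$ on $\RR^J$ and $g$ is convex.

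The main obstacle I anticipate is the coercivity estimate in the central step: essential smoothness has to be exploited purely through openness of $\dom g^*$ and a uniform upper bound of $g^*$ on a closed neighborhood, rather than via gradient blow-up at the boundary, and the upgrade to a super-linear bound hinges on the tangential direction choice $u = z/\|z\|$. A minor subtlety is the implicit requirement $\inte\dom g^{**} \neq \emptyset$ in the final propagation step; if this failed one would restrict the entire argument to the affine hull of $\dom g^{**}$, where essential smoothness of $g^*$ modulo the orthogonal complement remains valid.
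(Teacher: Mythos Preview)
The paper does not supply a proof of Proposition~\ref{vlad}; it is quoted from Soloviov \cite{solo} and used as a black box in the proof of Theorem~\ref{mainpart}. So there is no ``paper's approach'' to compare against, and your outline is a correct self-contained argument: the coercivity estimate from local boundedness of $g^*$ on $y+\epsilon\ball$ is exactly right, the identification $x^*=\nabla g^*(y)$ is Rockafellar's characterisation of essential smoothness (\cite[Theorem~26.1]{Rock70}: $\partial g^*$ is single-valued on $\inte\dom g^*$ and empty elsewhere), and the biconjugate matching followed by the line-segment closure is standard.

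The one point that deserves more than the parenthetical you give it is the possibility $\inte\dom g^{**}=\varnothing$. This can genuinely occur under the hypotheses: with $J=2$ and $g(x)=\tfrac12 x_1^2+\iota_{\{x_2=0\}}(x)$ one has $g^*(y)=\tfrac12 y_1^2$, which is essentially smooth with full domain, yet $\dom g^{**}$ is a line. Your suggested fix via the affine hull works, but the cleaner repair is to use relative interior throughout the propagation step: one always has $\reli\dom g^{**}\subset\dom\partial g^{**}=\ran\partial g^*=\nabla g^*(\inte\dom g^*)$ in finite dimensions, and the segment argument from a point $x_0\in\reli\dom g^{**}$ to an arbitrary $x\in\dom g^{**}$ goes through verbatim since $x_n=(1-\tfrac1n)x+\tfrac1n x_0\in\reli\dom g^{**}$ and $g^{**}(x_n)\to g^{**}(x)$ by convexity and lower semicontinuity.
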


Now we are ready for the main result of this section.
\begin{theorem}[Characterizations of $\bD{}$-Chebyshev sets]\label{mainpart}
The following are equivalent:
\begin{enumerate}
\item\label{whati} $C$ is convex;
\item\label{whatii} $C$ is \bDC, i.e., $\bproj{C}$ is single-valued on $U$;
\item\label{whatiii} $\bproj{C}$ is continuous on $U$;
\item \label{whativ} $\bD{C}\circ\nabla f^*$ is differentiable on $\RR^J$;
\item \label{whatv} $f+\iota_{C}$ is convex.
\end{enumerate}
When these equivalent conditions hold, we have
\begin{equation}\label{chenchen1}
\nabla (\bD{C}\circ \nabla f^*)=\nabla f^*-\bproj{C}\circ\nabla f^* \quad \mbox{ on $\RR^J$};
\end{equation}
consequently, $\bD{C}\circ \nabla f^*$ is continuously differentiable.

If, in addition, $f$ is twice continuously differentiable on $U$ and
$\Hess f(y)$ is positive definite $\forall y\in U$, then
\emph{\ref{whati}--\ref{whatv}} are equivalent to
\begin{enumerate}
\item[{\rm (vi)}] $\bD{C}$ is differentiable on $U$.
\end{enumerate}
In this case, we have
\begin{equation}\label{chenchen2}
\nabla\bD{C}(y)=\Hess f(y)[y-\bproj{C}(y)] \quad \mbox{ $\forall y\in U$};
\end{equation}
consequently, $\bD{C}$ is continuously differentiable.
\end{theorem}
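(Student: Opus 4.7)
The plan is to close a cycle of equivalences among (i)--(v), then derive the gradient formula \eqref{chenchen1} by differentiating a conjugate identity, and finally dispatch (vi) together with \eqref{chenchen2} under the additional smoothness hypothesis.

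I would first handle the easier equivalences. For (i) $\Leftrightarrow$ (v): since $C\subset U\subset\dom f$, we have $\dom(f+\iota_C)=C$; the effective domain of a convex function is convex, giving (v) $\Rightarrow$ (i), while (i) $\Rightarrow$ (v) is immediate because $f$ and $\iota_C$ are both convex. For (i) $\Leftrightarrow$ (ii): the direction (ii) $\Rightarrow$ (i) is the theorem ``\bDC\ sets are convex'' proved in Section~\ref{monotone}, and (i) $\Rightarrow$ (ii) combines Theorem~\ref{banffday}(i) for nonemptiness of $\bproj{C}(y)$ with the essential strict convexity of $f$ on $U\supset C$, which renders $D(\cdot,y)$ strictly convex on the convex set $C$. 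For (ii) $\Leftrightarrow$ (iii): continuity of the single-valued map $\bproj{C}\colon U\to C$ presupposes single-valuedness, while the converse is Theorem~\ref{banffday}(iii) (or equivalently Proposition~\ref{maximalcase}).

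The new link is (ii) $\Leftrightarrow$ (iv). I would start from the identity
\[
\bD{C}\circ\nabla f^* \;=\; f^*-(f+\iota_{C})^* \quad \text{on } \RR^J,
\]
recorded in Proposition~\ref{distance} and Remark~\ref{r:071210}. By Fact~\ref{isom}, $f^*$ is itself Legendre and hence differentiable on $\intdom f^*=\RR^J$, so (iv) is equivalent to differentiability of $(f+\iota_C)^*$ on $\RR^J$. Lemma~\ref{conjugatesub} identifies
\[
\partial(f+\iota_{C})^{*}(s)=\conv\bigl[\bproj{C}(\nabla f^{*}(s))\bigr],
\]
which collapses to a singleton precisely when $\bproj{C}(\nabla f^*(s))$ does. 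Since $\nabla f^*$ is a bijection from $\RR^J$ onto $U$ by Fact~\ref{isom}, the required single-valuedness for every $s\in\RR^J$ is exactly (ii). The formula \eqref{chenchen1} then follows by differentiating the conjugate identity above: the gradient of $f^*$ at $s$ is $\nabla f^*(s)$, and under (ii) the gradient of $(f+\iota_C)^*$ at $s$ is the unique element of $\bproj{C}(\nabla f^*(s))$. Continuous differentiability of $\bD{C}\circ\nabla f^*$ then follows from the continuity of $\nabla f^*$ on $\RR^J$ (Fact~\ref{isom}) together with the continuity of $\bproj{C}$ on $U$ (condition (iii)).

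For the final block, assume additionally that $f$ is twice continuously differentiable with positive-definite Hessian on $U$. Corollary~\ref{singletonchar} yields (vi) $\Leftrightarrow$ ``$\bproj{C}(y)$ is a singleton for every $y\in U$'', which is exactly (ii); the same corollary supplies the formula \eqref{chenchen2}. Continuity of $y\mapsto\Hess f(y)[y-\bproj{C}(y)]$ is a consequence of continuity of $\Hess f$ (by hypothesis) and of $\bproj{C}$ (by (iii)), giving continuous differentiability of $\bD{C}$. The main obstacle in the whole argument is the equivalence (ii) $\Leftrightarrow$ (iv): it requires simultaneously invoking the conjugate identity, Lemma~\ref{conjugatesub}, and the bijectivity of $\nabla f^*$ from Fact~\ref{isom}, whereas the remaining equivalences essentially chain together results already established in previous sections.
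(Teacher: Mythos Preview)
Your proof is correct, but it differs from the paper's at one key implication: closing the loop back to convexity from (iv). The paper argues (iii)$\Rightarrow$(iv)$\Rightarrow$(v)$\Rightarrow$(i), where the step (iv)$\Rightarrow$(v) is obtained from the identity $(f+\iota_{C})^{*}=f^{*}-\bD{C}\circ\nabla f^{*}$ together with Soloviov's result (Proposition~\ref{vlad}): differentiability of $(f+\iota_{C})^{*}$ on $\RR^{J}$ means it is essentially smooth, so $f+\iota_{C}$ is convex. You instead establish (iv)$\Leftrightarrow$(ii) directly, using that $(f+\iota_{C})^{*}$ is differentiable at $s$ iff $\partial(f+\iota_{C})^{*}(s)=\conv\bproj{C}(\nabla f^{*}(s))$ is a singleton iff $\bproj{C}(\nabla f^{*}(s))$ is a singleton; then you obtain (ii)$\Rightarrow$(i) by invoking the maximal-monotone theorem from Section~\ref{monotone}. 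Both routes are valid. The paper's route is deliberate: Section~\ref{completecheby} is meant to furnish a \emph{second}, independent proof that \bDC\ sets are convex, via conjugate duality and Soloviov rather than via maximal monotonicity. Your route is shorter and avoids Proposition~\ref{vlad} entirely, but it recycles the Section~\ref{monotone} argument, so it does not yield an independent proof. Everything else---the treatment of (i)$\Leftrightarrow$(v), (ii)$\Leftrightarrow$(iii), the gradient formula~\eqref{chenchen1}, and the handling of (vi) via Corollary~\ref{singletonchar}---matches the paper's reasoning.
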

\begin{proof}
\ref{whati}$\Rightarrow$\ref{whatii} is well known; see, e.g.,
\cite[Theorem~3.12]{Baus97}.
\ref{whatii}$\Rightarrow$\ref{whatiii} follows from
Theorem~\ref{banffday}(iii).
To see \ref{whatiii}$\Rightarrow$\ref{whativ}, we use Remark~\ref{r:071210}:
$$\bD{C}\circ\nabla f^*=f^*-(f+\iota_{C})^*.$$
Since $\bproj{C}$ is continuous on $U$ and $\nabla f^*:\RR^J\rightarrow U$,  $\partial (f+\iota_{C})^*$ is single-valued on $\RR^J$
by Lemma~\ref{conjugatesub}. Thus,
 $(f+\iota_{C})^*$ is differentiable on $\RR^J$.
Altogether, $\bD{C}\circ\nabla f^* = f^* - (f+\iota_C)^*$ is differentiable on
$\RR^J$.

When $f+\iota_{C}$ is convex, since $C\subset U$ we have that $\dom (f+\iota_{C})=C$ is convex, and this
shows \ref{whatv}$\Rightarrow$\ref{whati}.
We now prove \ref{whativ}$\Rightarrow$\ref{whatv} and assume \ref{whativ}.
Remark~\ref{r:071210} shows
\begin{equation}\label{chenchen3}
(f+\iota_{C})^*=f^*-\bD{C}\circ\nabla f^*,
\end{equation}
which implies that
\begin{equation}\label{betterform}
(f+\iota_{C})^* \mbox{ is differentiable on $\RR^J$.}
\end{equation}
Since $f+\iota_{C}$ is lower semicontinuous, it follows from Proposition~\ref{vlad} that $f+\iota_{C}$ is convex.

When equivalent conditions (i)-(v) hold,
(\ref{chenchen1}) follows from Lemma~\ref{conjugatesub} and (\ref{chenchen3}).
Since $\nabla f^*$ is continuous and $\bproj{C}$ is continuous by
\ref{whatiii}, we obtain that $\bD{C}\circ\nabla f^*$ is continuously
differentiable.
When $\Hess f(y)$ is positive definite $\forall y\in U$,
(ii)$\Leftrightarrow$(vi) by Corollary~\ref{singletonchar}.
Finally, (\ref{chenchen2}) follows from Theorem~\ref{complete}, i.e., (\ref{theset}). This finishes the proof.
\end{proof}

\section{Right Bregman Projections}\label{rightchar}

In this section, it will be convenient to write $D_{f}$ for
the {Bregman distance} associated with $f$ (see \eqref{eq:D}).
Correspondingly, we write
$\bproj{C}^{f}$, $\fproj{C}^{f}$ for the corresponding left and
right projection operators. While $D_{f}$ is convex in its
first argument, it is not necessarily so in its second argument.
The properties of $\fproj{C}^{f}$ can be studied by using
$\bproj{\nabla f(C)}^{f^*}$.

\begin{proposition}\label{different}
Let $f\in\Gamma$ be Legendre and $C\subset\inte\dom f$. Then
for the right Bregman nearest point projection, we have
\begin{equation}\label{left1.1}
\fproj{C}^{f}=\nabla f^*\circ \bproj{\nabla f(C)}^{f^*}\circ\nabla
f;
\end{equation}
or equivalently,
\begin{equation}\label{left1}
\bproj{\nabla f(C)}^{f^*}=\nabla f\circ \fproj{C}^{f}\circ\nabla
f^*.
\end{equation}
\end{proposition}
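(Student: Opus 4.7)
The plan is to reduce the right Bregman projection with respect to $f$ to a left Bregman projection with respect to $f^*$ via the fundamental duality identity
\begin{equation*}
D_f(x,y) = D_{f^*}(\nabla f(y),\nabla f(x)) \qquad \text{for all } x,y\in\IDD.
\end{equation*}
First I would verify this identity by a direct calculation: using the Fenchel equality $f(y)+f^*(\nabla f(y)) = \langle \nabla f(y),y\rangle$ (valid for $y\in \IDD$), the right Bregman distance $D_f(x,y)=f(x)-f(y)-\langle \nabla f(y),x-y\rangle$ rewrites as $f(x)+f^*(\nabla f(y))-\langle \nabla f(y),x\rangle$. Performing the analogous rearrangement on $D_{f^*}(\nabla f(y),\nabla f(x))$ and invoking Fact~\ref{isom} (which gives $\nabla f^*\circ\nabla f=\Id$ on $\IDD$ as well as $f^{**}=f$), one lands on the same expression.

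Second, fix $y\in\IDD$ and set $u:=\nabla f(y)\in\inte\dom f^*$. By definition, $\fproj{C}^{f}(y)=\argmin_{c\in C}D_{f}(y,c)$. Applying the identity with the roles of $x$ and $y$ swapped gives $D_f(y,c)=D_{f^*}(\nabla f(c),u)$. Since Fact~\ref{isom} guarantees that $\nabla f\colon\IDD\to\inte\dom f^*$ is a topological isomorphism, the substitution $v=\nabla f(c)$ sets up a bijective, bicontinuous correspondence between $C$ and $\nabla f(C)$, and the objective transforms as $D_{f^*}(v,u)$. Hence
\begin{equation*}
\argmin_{c\in C} D_f(y,c) \;=\; \nabla f^*\bigl(\argmin_{v\in\nabla f(C)} D_{f^*}(v,u)\bigr)
\;=\; \nabla f^*\bigl(\bproj{\nabla f(C)}^{f^*}(\nabla f(y))\bigr),
\end{equation*}
which is exactly \eqref{left1.1}. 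The equivalent formulation \eqref{left1} then follows by composing both sides on the left with $\nabla f$ and on the right with $\nabla f^*$, using $\nabla f\circ\nabla f^*=\Id$ on $\RR^J$ (again Fact~\ref{isom}).

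There is no serious obstacle here; the entire content is packed into the duality identity for $D_f$, which is essentially a one-line calculation. The only bookkeeping to watch is that the image $\nabla f(C)$ lies in $\inte\dom f^*$ (so the left projection $\bproj{\nabla f(C)}^{f^*}$ makes sense), which is immediate from $C\subset\IDD$ and the isomorphism property of $\nabla f$, and that the bijection $\nabla f\colon C\to\nabla f(C)$ preserves argmin sets — which is a tautology since $\nabla f$ is a bijection on the underlying sets.
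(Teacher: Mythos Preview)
Your proof is correct and follows essentially the same route as the paper: both arguments rest on the duality identity $D_f(x,y)=D_{f^*}(\nabla f(y),\nabla f(x))$ (the paper cites it from \cite{Baus97} while you verify it directly) and then transport the $\argmin$ through the bijection $\nabla f$, with the paper deriving \eqref{left1} first and you \eqref{left1.1} first. One small slip: this proposition assumes only that $f$ is Legendre, not \textbf{A2}, so $\nabla f\circ\nabla f^*=\Id$ holds on $\inte\dom f^*$ rather than on all of $\RR^J$---but that is precisely the domain you need, so the argument is unaffected.
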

\begin{proof}
By \cite[Theorem~3.7(v)]{Baus97} (applied to $f^*$ rather than $f$),
$$D_{f^*}(x^*,y^*)=D_{f}(\nabla f^*(y^*),\nabla f^*(x^*)) \quad \forall x^*,y^*\in\inte\dom f^*.$$
For every $y^*\in\inte\dom f^*$, we thus have
\begin{align}
\bproj{\nabla f(C)}^{f^*}(y^*) & =\argmin_{x^*\in \nabla f(C)} D_{f^*}(x^*,y^*)=\argmin_{x^*\in \nabla f(C)} D_{f}(\nabla f^*(y^*),\nabla f^*(x^*))\label{manteo1}\\
& = \nabla f (\fproj{\nabla f^*(\nabla f(C))}^{f}(\nabla f^*(y^*)))=\nabla f(\fproj{C}^{f}(\nabla f^*(y^*)))\\
& =(\nabla f \circ \fproj{C}^{f}\circ\nabla
f^*)(y^*),\label{manteo3}
\end{align}
which gives (\ref{left1}). Finally, we see that (\ref{left1.1}) is
equivalent to (\ref{left1}) by using Fact~\ref{isom}.
\end{proof}

\begin{lemma}\label{preparation}
Let $f\in\Gamma$ be Legendre, let $C\subset\RR^J$ be such that
$\closu{C}\subset
\intdom f$, and assume that for every $y\in \intdom f$,
$\bproj{C}^{f}(y)\neq\varnothing$.
Then $C$ is closed.
\end{lemma}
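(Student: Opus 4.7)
The plan is to argue by sequential closedness. Take an arbitrary $\bar{x}\in\closu C$; I want to show $\bar{x}\in C$. Since $\closu C\subset \intdom f$, we have $\bar{x}\in U=\intdom f$, so by hypothesis $\bproj{C}^{f}(\bar{x})\neq\varnothing$. Pick any $z\in \bproj{C}^{f}(\bar{x})$, so $z\in C\subset U$ and
\[
D_{f}(z,\bar{x})\leq D_{f}(x,\bar{x})\quad \forall x\in C.
\]

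Next, I would choose a sequence $(x_{n})$ in $C$ with $x_{n}\to \bar{x}$. Because $\bar{x}\in\intdom f$, the function $f$ is continuous at $\bar{x}$, so $f(x_{n})\to f(\bar{x})$, and $\langle\nabla f(\bar{x}), x_{n}-\bar{x}\rangle\to 0$. From the definition of $D_{f}$ this yields
\[
D_{f}(x_{n},\bar{x}) = f(x_{n})-f(\bar{x})-\langle \nabla f(\bar{x}),x_{n}-\bar{x}\rangle\;\longrightarrow\;0.
\]
Combining with the minimality of $z$, I conclude $0\leq D_{f}(z,\bar{x})\leq \lim_{n} D_{f}(x_{n},\bar{x})=0$, hence $D_{f}(z,\bar{x})=0$.

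Finally, I would invoke the Legendre (in particular essentially strictly convex) hypothesis on $f$: since both $z$ and $\bar{x}$ lie in $\intdom f$, where $f$ is strictly convex and differentiable, the strict tangent-inequality $f(x)>f(\bar{x})+\langle\nabla f(\bar{x}),x-\bar{x}\rangle$ for $x\in\intdom f\setminus\{\bar{x}\}$ forces $z=\bar{x}$. Thus $\bar{x}=z\in C$, and $C$ is closed.

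The only delicate point is checking that $D_{f}(z,\bar{x})=0$ really implies $z=\bar{x}$; this relies essentially on $\bar{x}\in\intdom f$ (so that $f$ is differentiable and strictly convex there) and on $z\in C\subset\intdom f$. The hypothesis $\closu C\subset\intdom f$ is what simultaneously ensures $\bar{x}\in U$ (allowing us to apply the nonemptiness assumption on $\bproj{C}^{f}$) and $z\in\intdom f$ (allowing strict convexity to bite). Everything else is continuity of $f$ at an interior point of its domain.
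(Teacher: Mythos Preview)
Your proof is correct and follows essentially the same approach as the paper's: both take a limit point $\bar{x}\in\closu C\subset U$, use continuity of $f$ at $\bar{x}$ to get $D_f(x_n,\bar{x})\to 0$ along an approximating sequence in $C$, and then combine the nonemptiness of $\bproj{C}^f(\bar{x})$ with the strict convexity of $f$ on $U$ (so that $D_f(z,\bar{x})=0\Rightarrow z=\bar{x}$) to conclude $\bar{x}\in C$. The paper phrases the last step as a contradiction (if $\bar{x}\notin C$ then $D_f(c,\bar{x})>0$ for all $c\in C$, contradicting that the infimum $0$ is attained), whereas you argue directly, but this is purely cosmetic.
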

\begin{proof}
Assume that $(c_{n})_{n=1}^{\infty}$ is a sequence in $C$,
and $c_{n}\to y$. We need to show that
$y\in C$. By assumption $y\in\closu{C}$, and
$y\in U$.
If $y\notin C$, then
\begin{equation}\label{strict}
D_{f}(c,y)=f(c)-f(y)-\langle\nabla f(y),c-y\rangle>0, \quad \forall c\in C,
\end{equation}
by, e.g., \cite[Theorem~3.7.(iv)]{Baus97}.
On the other hand, as $f$ is continuous on $U$,
$$
0\leq \bD{C}^{f}(y) \leq D_{f}(c_{n},y)= f(c_{n})-f(y)-\langle \nabla
f(y),c_{n}-y\rangle \to 0.
$$
Thus, $\bD{C}^{f}(y)=0$. Using (\ref{strict}), we see that this
 contradicts the assumption that $\bproj{C}^{f}(y)\neq \varnothing$.
\end{proof}

\begin{theorem}\label{termbegin}
Let $f\in\Gamma$ be Legendre, with full domain $\RR^J$,
and let $C\subset\RR^J$ be closed with $\closu (\nabla f(C))\subset\inte\dom f^*$. Assume that $\fproj{C}^{f}(y)$ is a
singleton for every $y\in\RR^J$. Then $\nabla f(C)$ is convex.
\end{theorem}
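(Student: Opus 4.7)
The plan is to transfer the Chebyshev property of $C$ under the right Bregman projection $\fproj{C}^{f}$ into the Chebyshev property of $\nabla f(C)$ under the left Bregman projection with respect to $f^{*}$, and then invoke the left-sided convexity theorem. The key enabling identity is formula~\eqref{left1} in Proposition~\ref{different}.

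First, I would verify that $f^{*}$ satisfies the standing assumptions A1 and A2 of Section~\ref{assumption}. By Fact~\ref{isom}, $f^{*}$ is itself Legendre, giving A1. Since $\dom f=\RR^{J}$ and $f\in\Gamma$, we have $f^{**}=f$ has full domain, and by A2 (applied with $f^{*}$ in place of $f$) this means $f^{*}$ is $1$-coercive. Thus the left Bregman apparatus developed in Sections~\ref{geodesicscurve}--\ref{completecheby} is available for $f^{*}$ and any nonempty closed subset of $\inte\dom f^{*}$.

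Next I would use \eqref{left1} to rewrite
\begin{equation*}
\bproj{\nabla f(C)}^{f^{*}}(y^{*})
=\nabla f\bigl(\fproj{C}^{f}(\nabla f^{*}(y^{*}))\bigr)
\quad\text{for every } y^{*}\in\inte\dom f^{*}.
\end{equation*}
Since $\fproj{C}^{f}$ is single-valued on $\RR^{J}=\dom f$ by hypothesis, the right-hand side is a single point, so $\bproj{\nabla f(C)}^{f^{*}}$ is nonempty single-valued on $\inte\dom f^{*}$. In particular, it is nonempty everywhere on $\inte\dom f^{*}$, which together with the hypothesis $\closu(\nabla f(C))\subset\inte\dom f^{*}$ lets me invoke Lemma~\ref{preparation} (applied to $f^{*}$ in place of $f$ and $\nabla f(C)$ in place of $C$) to conclude that $\nabla f(C)$ is closed. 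Hence $\nabla f(C)$ also satisfies A3 with respect to $f^{*}$.

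At this point all standing assumptions hold for $f^{*}$ and $\nabla f(C)$, and $\nabla f(C)$ is Chebyshev with respect to the left Bregman distance $D_{f^{*}}$. Applying the theorem ``$\bD{}$-Chebyshev sets are convex'' from Section~\ref{monotone} to $f^{*}$ and $\nabla f(C)$ yields that $\nabla f(C)$ is convex. The only step that requires care is the closedness of $\nabla f(C)$ (since $\nabla f$ is only a homeomorphism between $U$ and $\inte\dom f^{*}$, not necessarily closed-map on subsets), but the hypothesis $\closu(\nabla f(C))\subset\inte\dom f^{*}$ was tailor-made so that Lemma~\ref{preparation} handles this cleanly.
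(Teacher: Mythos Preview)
Your proposal is correct and follows essentially the same route as the paper: verify that $f^{*}$ is Legendre and $1$-coercive, use \eqref{left1} to deduce that $\bproj{\nabla f(C)}^{f^{*}}$ is single-valued on $\inte\dom f^{*}$, invoke Lemma~\ref{preparation} to get closedness of $\nabla f(C)$, and then apply the left-sided Chebyshev theorem. The only cosmetic difference is that the paper cites Theorem~\ref{mainpart} for the final step whereas you cite the theorem in Section~\ref{monotone}; either reference works.
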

\begin{proof} We have $f^*$ is Legendre and $f^*$ is $1$-coercive.
By (\ref{left1}),
$\bproj{\nabla f(C)}^{f^*}(y)$ is single-valued for every
$y\in\inte\dom f^*$. As $\closu (\nabla f(C))\subset\inte\dom f^*$, Lemma~\ref{preparation} says that
the set $\nabla f(C)$ is closed.
Hence we apply Theorem~\ref{mainpart} to $f^*$ and $\nabla f(C)$, and we
obtain that $\nabla f(C)$ is convex.
\end{proof}

\begin{corollary}\label{termbegin1}
Let $f$ and $C$ satisfy \emph{\textbf{A1}--\textbf{A3}},
assume that $f$ has full domain, and
that $\fproj{C}^{f}(y)$ is a
singleton for every $y\in\RR^J$. Then $\nabla f(C)$ is convex.
\end{corollary}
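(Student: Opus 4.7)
The plan is to derive this as an immediate specialization of Theorem~\ref{termbegin}. Since the conclusion and the single-valuedness hypothesis of the corollary already match those of the theorem, the only work is to check that the assumptions \textbf{A1}--\textbf{A3} together with the ``full domain'' hypothesis imply the three structural hypotheses of Theorem~\ref{termbegin}: $f\in\Gamma$ is Legendre with $\dom f=\RR^J$; $C\subset\RR^J$ is closed; and $\closu(\nabla f(C))\subset\inte\dom f^*$.

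I would carry out the verification in the following order. First, \textbf{A1} states that $f$ is Legendre, and the corollary's extra hypothesis gives $\dom f=\RR^J$, so the first bullet of Theorem~\ref{termbegin}'s hypotheses is immediate. Second, \textbf{A3} asserts that $C$ is a nonempty closed subset of $U=\inte\dom f$, and under $\dom f=\RR^J$ this simply says that $C$ is a closed subset of $\RR^J$. Third, \textbf{A2} tells us that $f$ is $1$-coercive, which is equivalent (by the parenthetical reference to \cite[Theorem~11.8(d)]{Rock98} recalled in \textbf{A2}) to $\dom f^*=\RR^J$; hence $\inte\dom f^*=\RR^J$, and the containment
\begin{equation*}
\closu(\nabla f(C))\subset\RR^J=\inte\dom f^*
\end{equation*}
holds trivially.

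With the three hypotheses verified, Theorem~\ref{termbegin} applies and yields that $\nabla f(C)$ is convex, which is exactly the desired conclusion. There is no genuine obstacle: the only observation one has to make is that the ``closure-in-interior'' condition on $\nabla f(C)$, which might look restrictive in the statement of Theorem~\ref{termbegin}, is made vacuous by the $1$-coercivity in \textbf{A2}, so nothing needs to be checked about the set $\nabla f(C)$ itself beyond what is provided by the standing assumptions.
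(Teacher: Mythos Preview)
Your proposal is correct and is exactly the intended argument: the paper states Corollary~\ref{termbegin1} immediately after Theorem~\ref{termbegin} without a separate proof, so it is meant to follow directly by checking that \textbf{A1}--\textbf{A3} plus full domain give the hypotheses of Theorem~\ref{termbegin}, precisely as you do. Your observation that \textbf{A2} (i.e., $\dom f^*=\RR^J$) renders the condition $\closu(\nabla f(C))\subset\inte\dom f^*$ vacuous is the only point worth noting, and you handle it cleanly.
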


The following example shows that even if $\fproj{C}^{f}(y)$ is a
singleton for every $y\in\inte\dom f$, the set $C$ may fail to be
convex. Thus, Theorem~\ref{mainpart} fails for the right Bregman
projection $\fproj{C}^f$. Note that
Theorem~\ref{termbegin} allows us to conclude
that $\nabla f(C)$ is convex rather than $C$.

\begin{example}
Consider the Legendre function $f:\RR^2\rightarrow\RR$ given by
$$f(x,y):=e^{x}+e^{y}\qquad \forall (x,y)\in\RR^2,$$
and its Fenchel conjugate
\begin{equation*}
f^*\colon \RR^2\rightarrow\RX \colon (x,y) \mapsto
\begin{cases}
x\ln x-x+y\ln y-y, &\text{if}\;\;x\geq 0, y\geq 0;\\
\pinf, & \text{otherwise}.
\end{cases}
\end{equation*}
Define a
compact convex set $$C:=\big[(0,0),(1,2)\big]=\{(\lambda,2\lambda):
0\leq\lambda\leq 1\}.$$
As $\nabla f(x,y)=(e^x,e^y)$ for every
$(x,y)\in \RR^2$, we see that
$$\nabla f(C)=\{(e^{\lambda},e^{2\lambda}):\ 0\leq \lambda\leq 1\}$$
is compact but clearly \emph{not convex}.

(i) In view of Theorem~\ref{termbegin} and
the lack of convexity of $\nabla f(C)$, there must exist
$(x,y)\in \RR^2$ such that $\fproj{C}^{f}(x,y)$ is \emph{multi-valued}.

(ii)
Since $\bproj{C}^{f}(x,y)$ is a singleton for every $(x,y)\in
\RR^2$, and since
$\fproj{\nabla f(C)}^{f^*}=\nabla f\circ \bproj{ C}^{f}\circ\nabla f^*$
by Proposition~\ref{different} (applied to $f^*$ and $\nabla f(C)$),
we deduce that
$\fproj{\nabla f(C)}^{f^*}$ is
single-valued on $\inte\dom f^*=\{(x,y):\; x>0,y>0\}.$
Therefore, the analogue of Theorem~\ref{mainpart}
for the right Bregman projection
Theorem~\ref{mainpart} fails even though $f^*$ is Legendre and
$1$-coercive.
\end{example}

\section*{Acknowledgments}
Heinz Bauschke was partially supported by the Natural Sciences and
Engineering Research Council of Canada and by the Canada Research Chair
Program.
Xianfu Wang was partially
supported by the Natural Sciences and Engineering Research Council
of Canada.
Jane Ye was partially
supported by the Natural Sciences and Engineering Research Council
of Canada.
Xiaoming Yuan was partially supported by the Pacific Institute for the
Mathematical Sciences, by the University of Victoria,
by the University of British Columbia Okanagan, and
by the National Science Foundation of China Grant~10701055.

\small

\end{document}